\documentclass[12pt,reqno]{amsart}
\usepackage{amsfonts,amsmath,amssymb,amsthm,color,enumerate,graphics,hyperref,latexsym,mathrsfs,mathtools,stmaryrd,xspace}

\newcommand{\A}{A}

\newcommand{\AS}{\textup{AS}\xspace}
\newcommand{\Aut}{\mathrm{Aut}}

\newcommand{\calC}{\mathcal{C}}
\newcommand{\calO}{\mathcal{O}}
\newcommand{\Cay}{\mathrm{Cay}}
\newcommand{\CD}{\textup{CD}\xspace}
\newcommand{\Cen}{\mathbf{C}}
\newcommand{\Co}{\mathrm{Co}}
\newcommand{\Cos}{\mathrm{Cos}}

\newcommand{\GammaSp}{\mathrm{\Gamma Sp}}
\newcommand{\GL}{\mathrm{GL}}

\newcommand{\M}{\mathrm{M}}
\newcommand{\magma}{\textsc{Magma}\xspace}

\newcommand{\Nor}{\mathbf{N}}
\newcommand{\Out}{\mathrm{Out}}
\newcommand{\PA}{\textup{PA}\xspace}
\newcommand{\PGL}{\mathrm{PGL}}
\newcommand{\PGammaL}{\mathrm{P\Gamma L}}

\newcommand{\POmega}{\mathrm{P\Omega}}
\newcommand{\ppd}{\mathrm{ppd}}

\newcommand{\PSL}{\mathrm{PSL}}

\newcommand{\PSp}{\mathrm{PSp}}
\newcommand{\PSU}{\mathrm{PSU}}
\newcommand{\Rad}{\mathrm{Rad}}

\newcommand{\SD}{\textup{SD}\xspace}

\newcommand{\SL}{\mathrm{SL}}
\newcommand{\Soc}{\mathrm{Soc}}

\newcommand{\Sp}{\mathrm{Sp}}
\newcommand{\SU}{\mathrm{SU}}

\newcommand{\Sy}{S}
\newcommand{\Z}{Z}
\newcommand{\ZZ}{\mathbf{Z}}

\newtheorem{theorem}{Theorem}[section]

\newtheorem{lemma}[theorem]{Lemma}
\newtheorem{proposition}[theorem]{Proposition}
\theoremstyle{definition}

\newtheorem{example}[theorem]{Example}

\newtheorem{hypothesis}[theorem]{Hypothesis}

\newtheorem{question}[theorem]{Question}

\begin{document}

\title{Vertex-primitive $s$-arc-transitive Cayley digraphs}

\thanks{Corresponding author: Binzhou Xia}
\thanks{This work was supported by NNSFC (12431013), NNSFC (12571362), NSF of Guangxi (2025GXNSFAA069013) and Melbourne Research Scholarship.}

\author[J.J. Li]{Jing Jian Li}
\author[Y.T. Shi]{Yong Tang Shi}
\author[Y. Wang]{Yu Wang}
\author[B. Xia]{Binzhou Xia}

\address{J.J. Li\\School of Mathematics, Center for Applied Mathematics of Guangxi, Guangxi University, Nanning, Guangxi, 530004, P.R.China}
\email{lijjhx@gxu.edu.cn}
\address{Y.T. Shi\\Center for Combinatorics and LPMC, Nankai University, Tianjin, 300071, P.R.China}
\email{shi@nankai.edu.cn}
\address{Y. Wang\\ Center for Combinatorics and LPMC, Nankai University, Tianjin, 300071, P.R.China; School of Mathematics, Center for Applied Mathematics of Guangxi, Guangxi University, Nanning, Guangxi, 530004, P.R.China}
\email{wangyu97@mail.nankai.edu.cn}
\address{B. Xia\\School of Mathematics and Statistics, University of Melbourne, Parkville, VIC 3010, Australia}
\email{binzhoux@unimelb.edu.au}

\begin{abstract}
Determining an upper bound on $s$ for vertex-primitive $s$-arc-transitive digraphs has been an open problem of considerable interest since a question asked by Praeger in 1990. Although much progress has been made and an upper bound is conjectured to be $2$, a complete classification for $s=2$ remains out of reach. In this paper, we prove that the tight upper bound on $s$ for finite vertex-primitive $s$-arc-transitive Cayley digraphs is exactly $2$. Furthermore, we completely characterize the structure of these digraphs when $s=2$.

\noindent\textit{Keywords:} Cayley digraph; vertex-primitive; $s$-arc-transitive

\noindent\textit{MSC2020:} 05C20; 05C25
\end{abstract}

\maketitle

\section{Introduction}
A \emph{digraph} $\Gamma$ is a pair $(V,\rightarrow)$ with a set $V$ of vertices and an antisymmetric irreflexive binary relation $\rightarrow$ on $V$.
Denote by $V(\Gamma)$ and $\Aut(\Gamma)$ the vertex set and the full automorphism group of $\Gamma$ respectively.
For a nonnegative integer $s$, an \emph{$s$-arc} of $\Gamma$ is an $(s+1)$-tuple $(v_0,v_1,\ldots,v_s)$ of vertices such that $v_{i-1}\rightarrow v_i$ for each $i\in\{1,\ldots,s\}$.
For a subgroup $G$ of $\Aut(\Gamma)$, we say that $\Gamma$ is \emph{$G$-vertex-primitive} if $G$ acts primitively on $V(\Gamma)$ (that is, $G$ does not preserve any nontrivial partition of $V(\Gamma)$), and that $\Gamma$ is \emph{$(G,s)$-arc-transitive} if $G$ acts transitively on the set of $s$-arcs of $\Gamma$.
It is easy to see that a vertex-transitive $s$-arc-transitive digraph with $s\geq 1$ is also $(s-1)$-arc-transitive.
A digraph $\Gamma$ is simply called \emph{vertex-primitive} if it is $\Aut(\Gamma)$-vertex-primitive, and \emph{$s$-arc-transitive} if it is $(\Aut(\Gamma),s)$-arc-transitive.

A well-known result of Weiss~\cite{Weiss1981} establishes the tight upper bound $s\leq7$ for finite $s$-arc-transitive graphs of valency at least $3$ (note that $s$ can be arbitrarily large for valency $2$ as cycles are $s$-arc-transitive for all $s\geq0$).
In contrast, there exist infinite families of finite $s$-arc-transitive digraphs with unbounded $s$ other than directed cycles (see, for example,~\cite[Theorem~2.8~and~Proposition~2.11]{Praeger1989}).
However, all such examples are imprimitive, and the existence of finite vertex-primitive $2$-arc-transitive digraphs aside from directed cycles remained unknown until Giudici, Li and Xia~\cite{GLX2017} constructed the first family of such digraphs in 2017.
To date, no finite vertex-primitive $3$-arc-transitive digraphs have been discovered.
Motivated by this construction, the following question is posed in~\cite{GLX2017}.

\begin{question}\label{Question}
Is there an upper bound on $s$ for finite vertex-primitive $s$-arc-transitive digraphs that are not directed cycles?
\end{question}

A group $G$ is said to be \emph{almost simple} if there exists a nonabelian simple group $T$ such that $T\trianglelefteq G\leq \Aut(T)$.
The O'Nan-Scott theorem classifies primitive permutation groups into eight types (see~\cite[Theorem~7.8]{PS2018}; see also~\cite{LPS1988} for the original proof), with almost simple groups constituting one of these classes.
A systematic investigation in~\cite[Corollary~1.6]{GX2018} of these O'Nan-Scott types has reduced Question~\ref{Question} to the case of almost simple groups.
More precisely, it was shown that an upper bound on $s$ for finite vertex-primitive $s$-arc-transitive digraphs $\Gamma$ with an almost simple automorphism group $\Aut(\Gamma)$ also serves as an upper bound for all finite vertex-primitive $s$-arc-transitive digraphs, excluding directed cycles.
Since then, this question has been resolved for several infinite families of almost simple groups: alternating and symmetric groups~\cite{CCGLPX2025,PWY2020}, linear groups~\cite{GLX2019}, symplectic groups~\cite{CGP2025}, exceptional groups of Lie type except for $\mathrm{E}_7$ and $\mathrm{E}_8$~\cite{CGP2023,YC2026}, and most sporadic groups~\cite[Lemma~4.4]{YFX2023}.

Although significant progress has been made, completely resolving Question~\ref{Question} for all almost simple groups remains an ongoing project.
Moreover, a review of these results indicates that determining a classification, or even the existence, of $2$-arc-transitive digraphs in these cases is currently out of reach.

Recall that a graph or digraph $\Gamma$ is said to be \emph{Cayley} on a group $R$ if $\Aut(\Gamma)$ contains a subgroup isomorphic to $R$ that acts regularly on $V(\Gamma)$.
It is widely believed that the vast majority of vertex-transitive graphs and digraphs in general are Cayley (see~\cite{MP1994} and~\cite[\S8.4]{MS2021}).
Thus, to further our understanding of Question~\ref{Question}, it is natural to focus on Cayley digraphs.
Another motivation, as can be seen by Proposition~\ref{PropSDCD}, is that the infinite families of vertex-primitive $2$-arc-transitive digraphs in~\cite[Theorem~1.2~and~Corollary~1.4]{GX2018} are Cayley.
In this paper, we determine the tight upper bound on $s$ for finite vertex-primitive $s$-arc-transitive Cayley digraphs and classify all such digraphs for $s\geq2$.
Recalling the definition of the direct product of digraphs from Section~\ref{Sec2}, our main result is stated as follows.

\begin{theorem}\label{ThmMain}
Let $\Gamma$ be a finite vertex-primitive Cayley digraph.
Then $\Gamma$ is $2$-arc-transitive if and only if one of the following holds:
\begin{enumerate}[{\rm(a)}]
\item\label{ThmMainA} $\Gamma$ is a directed cycle;
\item\label{ThmMainB} $\Gamma$ is isomorphic to the direct product of $m$ copies of $\Gamma(T)$ for some nonabelian simple group $T$ and positive integer $m$, where $\Gamma(T)$ is the digraph defined in Example~$\ref{ExampleSD}$.
\end{enumerate}
In particular, there exist no vertex-primitive $3$-arc-transitive Cayley digraphs other than directed cycles.
\end{theorem}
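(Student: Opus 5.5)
Both directions have to be handled, but most of the work is in the ``only if'' direction. For ``if'', directed cycles are trivially vertex-primitive (of prime order) and $s$-arc-transitive for all $s$. For the digraphs in (b) I would use Proposition~\ref{PropSDCD} and Example~\ref{ExampleSD}, which show that $\Gamma(T)$ is a Cayley digraph on $T$ and is $(G,2)$-arc-transitive for a primitive group $G$ of diagonal type with socle $T^2$. For the direct product $\Gamma(T)^m$ I would take $G\wr\Sy_m$ in product action, or more economically a diagonal-type group with socle $T^{m+1}$. Primitivity and $2$-arc-transitivity then pass to the product coordinatewise, and the group $T^m$ acts regularly, so the product is Cayley.

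For ``only if'', let $A=\Aut(\Gamma)$ be primitive and $2$-arc-transitive, and let $R\le A$ be regular. I would run through the O'Nan--Scott types, using the reduction of~\cite{GX2018}, which already rules out several types for $2$-arc-transitive digraphs or reduces them to smaller components.

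\begin{enumerate}[{\rm(i)}]
\item \emph{Affine type.} Here $A_v$ is irreducible on $V=\ZZ_p^d$. The out-neighbourhood $\Gamma^+(v)$ spans $V$, and I would expect $2$-arc-transitivity to force $\Gamma$ to be a directed cycle of prime length. Very likely this is handled in~\cite{GX2018}.
\item \emph{Almost simple type.} I would invoke the classification of primitive groups containing a regular subgroup, namely the Liebeck--Praeger--Saxl factorization $A=RA_v$ with $R\cap A_v=1$. This gives a short list of candidates ($\mathrm{A}_n$ on points or $k$-sets, classical groups on $1$-spaces, a few sporadic cases). I would then check each case against the known results for alternating, linear, symplectic and sporadic groups quoted above, aiming to show that none of them admits a $2$-arc-transitive digraph.
\item \emph{Product action, twisted wreath and compound diagonal types.} I would use the reductions of~\cite{GX2018}: product action reduces to components, and those components must again be Cayley with a regular subgroup projecting suitably. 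This is delicate, because a regular subgroup of the product need not split.
\item \emph{Simple diagonal type with socle $T^{k}$.} I would show that the digraph must be $\Gamma(T)^m$. The out-neighbourhood of the identity should be a conjugacy-closed structure forced by $2$-arc-transitivity, and this would follow the characterization of~\cite{GX2018} of diagonal-type $2$-arc-transitive digraphs, combined with Example~\ref{ExampleSD}.
\end{enumerate}

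The final claim then follows: neither the directed cycles nor the digraphs $\Gamma(T)^m$ are $3$-arc-transitive unless $\Gamma$ is a directed cycle. For $\Gamma(T)$ this should be a direct count showing that $|A_v|$ is smaller than the number of $3$-arcs starting at $v$, and the count extends to direct products.

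I expect the main obstacle to be the almost simple case (and the product-action case built on it). There one must combine the factorization classification with the incomplete knowledge of $2$-arc-transitive digraphs for families such as orthogonal and unitary groups. The key would be that a regular subgroup forces $A_v$ to be large and of a very restricted shape, for example point stabilizers in $\PSL$ acting on $1$-spaces. I would first try to exclude these by arguing that $A_v$ acts $2$-transitively or in a $2$-homogeneous way on suborbits, which contradicts the asymmetry of an arc relation.
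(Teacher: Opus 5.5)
Your split by O'Nan--Scott type matches the paper's route, as do your treatment of type \AS via the Liebeck--Praeger--Saxl tables of almost simple primitive groups with a regular subgroup and your appeal to~\cite{GX2018} for types \SD and \CD. (Types \textup{HA}, \textup{HS}, \textup{HC} and \textup{TW} are already excluded by~\cite[Theorem~3.1]{Praeger1989}.)

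\textbf{The gap is the product action case.} You assume the components ``must again be Cayley'', so that the almost simple case can be reused, but this does not follow. If $R\le G\wr\Sy_m$ is regular on $V(\Sigma)^m$, the coordinate projections of $R\cap G^m$ are only transitive, not regular, and there is no reason for $\Sigma$ to be a Cayley digraph. What~\cite[Corollary~3(iv)]{LPS2000} gives is only that the almost simple component $G$ has a transitive core-free subgroup $X$, i.e.\ a core-free factorization $G=XG_v$.

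The paper therefore needs a genuinely new argument:
\begin{enumerate}[{\rm(1)}]
\item It lists the factorizations $G=XG_v$ compatible with the constraints that $2$-arc-transitivity puts on $G_v$ (Lemma~\ref{LemPrimeFactn} and~\cite[Lemma~3.3 and Theorem~1.1]{YFX2023}), and identifies $X$ in the surviving cases.
\item It applies Lemma~\ref{LemRglr}. Suppose $(X/\Rad(X))^{(\infty)}$ is a nonabelian simple group $T$ with $|T|>|V(\Sigma)|$, and some prime $p$ satisfies $|T|_p=|G|_p=|V(\Sigma)|_p$. Then Scott's lemma and a $p$-part count force any regular subgroup of $G\wr\Sy_m$ to have order at least $|T|^m>|V(\Sigma)|^m$.
\item The one case where $X$ might be solvable, $(L,(L_v)^{(\infty)})=(\PSL_4(q),\PSp_4(q))$, is handled separately. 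The paper shows that $\Cen_L((G_{uv})^{(\infty)})$ and $\Cen_L((G_{vw})^{(\infty)})$ have different orders, contradicting Lemma~\ref{LemPrimeFactn}\eqref{LemPrimeFactnC}.
\end{enumerate}
None of this is in your plan, and it is the bulk of the proof. Your \AS sketch is also thin: the working tool there is the factorization $G_v=G_{uv}G_{vw}$ of Lemma~\ref{LemPrimeFactn}, not $2$-homogeneity on suborbits.

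\textbf{Smaller errors.} You misdescribe $\Gamma(T)$. It is $\Cos(T^k,D,g)$ with $k=|T|$, so:
\begin{itemize}
\item the diagonal group has socle $T^{|T|}$, not $T^2$;
\item there are $|T|^{|T|-1}$ vertices;
\item the regular subgroup is $\{(t_1,\ldots,t_{k-1},1)\}\cong T^{k-1}$ (Lemma~\ref{SDisCayley}), not $T$.
\end{itemize}
For $\Gamma(T)^m$ the regular subgroup is $(T^{k-1})^m$, and no group with socle $T^{m+1}$ is involved. With these corrections, your coordinatewise argument for the ``if'' direction is fine.

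Finally, your proposed count showing $\Gamma(T)^m$ is not $3$-arc-transitive needs $|\Aut(\Gamma(T)^m)_v|$, which you have not determined. It is simpler to run the type analysis with $G=\Aut(\Gamma)$ and quote the bound $s\le 2$ from~\cite{GX2018} for types \SD and \CD.
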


The proof of Theorem~\ref{ThmMain} proceeds by the primitive type.
Let $\Gamma$ be a $G$-vertex-primitive $(G,2)$-arc-transitive Cayley digraph that is not a directed cycle.
By~\cite[Theorem~3.1]{Praeger1989}, the primitive type of $G$ can only be Almost Simple (\AS), Simple Diagonal (\SD), Compound Diagonal (\CD), or Product Action (\PA).
In Section~\ref{SecASSDCD}, for type \AS, we prove that no such vertex-primitive $2$-arc-transitive Cayley digraph exists, a result stated as Proposition~\ref{PropAS}.
Moreover, we establish in the same section that $G$ is of type \SD or \CD if and only if $\Gamma$ belongs to a specific family of digraphs constructed from those given in Example~\ref{ExampleSD}; this is formulated as Proposition~\ref{PropSDCD}.
In Section~\ref{SecPA}, we address the case where $G$ is of type \PA, proving that no vertex-primitive $2$-arc-transitive Cayley digraph exists in this case either, which is formulated as Proposition~\ref{PropPA}.

\section{Preliminaries}\label{Sec2}

From now on, all digraphs and groups are assumed to be finite.
For a positive integer $n$ and a prime number $p$, denote by $\pi(n)$ the set of prime divisors of $n$, and if $X$ is a group then $\pi(|X|)$ is simply denoted by $\pi(X)$.
For a positive integer $n$, denote by $\Z_n$ the cyclic group of order $n$, and by $D_{2n}$ the dihedral group of order $2n$ if $n\geq 3$.
For a group $G$, denote by $\Soc(G)$ its socle (that is, the product of all minimal normal subgroups of $G$), and denote by $G^{(\infty)}$ the first perfect group in the derived series of $G$.
For two groups $K$ and $H$, denote by $K.H$ an extension of $K$ by $H$, and by $K\rtimes H$ a split extension.
For group-theoretic terminology not mentioned here, we refer the reader to~\cite{LPS1990}.

For a digraph $\Gamma$ with vertex set $U$ and a digraph $\Sigma$ with vertex set $V$, their \emph{direct product}, denoted by $\Gamma\times\Sigma$, is the digraph with vertex set $U\times V$ such that $(u_1,v_1)\rightarrow (u_2,v_2)$ if and only if $u_1\rightarrow u_2$ and $v_1\rightarrow v_2$, where $u_1,u_2\in U$ and $v_1,v_2\in V$.
For a positive integer $m$, we denote by $\Sigma^m$ the direct product of $m$ copies of $\Sigma$.

\subsection{The digraph $\Gamma(T)$}
\ \vspace{1mm}

Let $G$ be a group, let $H$ be a subgroup of $G$, let $V$ be the set of right cosets of $H$ in $G$, and let $g\in G\setminus H$ such that $g^{-1}\notin HgH$.
Define a binary relation $\to$ on $V$ by letting $Hx\to Hy$ if and only if $yx^{-1}\in HgH$ for any $x,y\in G$.
Then $(V,\to)$ is a digraph, denoted by $\Cos(G,H,g)$.
The following construction associated with nonabelian simple groups gives a family of digraphs that contribute to the examples of vertex-primitive $2$-arc-transitive digraphs.

\begin{example}[{\cite[Construction 3.1]{GX2018}}]\label{ExampleSD}
Let $T=\{t_1,\ldots,t_k\}$ be a nonabelian simple group of order $k$, let $D=\{(t,\ldots,t)\mid t\in T\}<T^k$, let $V$ be the set of right cosets of $D$ in $T^k$, and let $g=(t_1,\ldots,t_k)$.
Denote $\Gamma(T)=\Cos(T^k,D,g)$.
\end{example}

The majority of the following lemma is from~\cite[Theorem~1.2]{GX2018}.

\begin{lemma}\label{SDisCayley}
The digraph $\Gamma(T)$ in Example~$\ref{ExampleSD}$ is a vertex-primitive $2$-arc-transitive Cayley digraph and is not $3$-arc-transitive.
\end{lemma}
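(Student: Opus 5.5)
Most of the statement is taken from \cite[Theorem~1.2]{GX2018}. That result gives that $\Gamma(T)$ is vertex-primitive and $2$-arc-transitive, and I believe also that it is not $3$-arc-transitive. I will quote it for these three properties and supply the Cayley property myself.

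\textbf{Which facts to quote.} In \cite{GX2018} the group $G=T^k\rtimes\langle\sigma\rangle$ (or a suitable overgroup) acts on $V$, where $\sigma$ is a $k$-cycle permuting the coordinates. This action is primitive of type \SD. Since $g$ lists every element of $T$ exactly once, $DgD$ acts regularly on the relevant out-neighbourhoods, and this is what yields $2$-arc-transitivity. I will check that the cited theorem asserts exactly that $\Gamma(T)$ is $(\Aut(\Gamma(T)),2)$-arc-transitive, that $\Aut(\Gamma(T))$ is primitive, and that $\Gamma(T)$ is not $3$-arc-transitive. If the last of these is not stated there, I would argue as follows. By \cite[Theorem~3.1]{Praeger1989} the automorphism group is primitive of type \SD with socle $T^k$. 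The vertex stabiliser then lies in $D.(\Out(T)\times S_k)$. Its order is too small to act transitively on $3$-arcs, which number $|V|\cdot d^3$ with $d=|D|=|T|$, because that would force $|T|^3$ to divide $|T||\Out(T)|k!$ in a way that compares with the known $2$-arc structure. This counting step is the one I would expect to need care.

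\textbf{The Cayley property.} Let $T$ act on the vertex set $V$ of right cosets of $D$ in $T^k$. I need a subgroup $R\le T^k$ with $|R|=|T|^{k-1}$ and $R\cap D^x=1$ for every $x\in T^k$; then $R$ acts regularly on $V$. The natural choice is
\[
R=T^{k-1}\times 1=\{(r_1,\ldots,r_{k-1},1)\}.
\]
A conjugate $D^x$ consists of the elements $(t^{x_1},\ldots,t^{x_k})$ for $t\in T$. Such an element has last coordinate $1$ only when $t=1$, so $R\cap D^x=1$. Moreover $|R|=|T|^{k-1}=|V|$. Hence $R$ is regular, and since $T^k$ acts by right multiplication as automorphisms of $\Cos(T^k,D,g)$, the subgroup $R$ embeds in $\Aut(\Gamma(T))$. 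Therefore $\Gamma(T)$ is Cayley on $T^{k-1}$.

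The only genuine obstacle is confirming, or else reproving, the statement that $\Gamma(T)$ is not $3$-arc-transitive; the Cayley part is immediate.
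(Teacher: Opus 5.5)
Your proof is correct and is essentially the paper's: both quote \cite[Theorem~1.2]{GX2018}, which does explicitly state that $\Gamma(T)$ is not $3$-arc-transitive, and both use the regular subgroup $R=T^{k-1}\times 1$. Your check that $R\cap D^x=1$ for all $x$ together with $|R|=|V|$ is equivalent to the paper's observation that $M=RD$ and $R\cap D=1$ inside the right-multiplication group $M\cong T^k$; the paper takes $M\le\Aut(\Gamma(T))$ from \cite[Lemma~3.5]{GX2018}, whereas you verify it directly. Your fallback counting argument is therefore unnecessary, and as stated it would not work anyway: with $k=|T|$, the bound $|T|\,|\Out(T)|\,k!$ is divisible by $|T|^3$, so orders alone cannot rule out $3$-arc-transitivity.
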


\begin{proof}
By~\cite[Theorem~1.2]{GX2018}, $\Gamma(T)$ is a vertex-primitive $2$-arc-transitive digraph that is not $3$-arc-transitive.
According to~\cite[Lemma~3.5]{GX2018}, $\Aut(\Gamma(T))$ contains a subgroup $X$ that is a primitive permutation group with socle $M\cong T^k$, where $M$ is the permutation group on $V$ induced by the right multiplication.
Let 
\[
R=\{(t_1,\ldots,t_{k-1},1)\mid t_1,\ldots,t_{k-1}\in T\}\leq M\trianglelefteq X.
\]
It is straightforward to verify that $M=RD$ and $R\cap D=1$.
Hence $R$ acts regularly on $V$, and so $\Gamma(T)$ is a Cayley digraph, as required.
\end{proof}

\subsection{Technical lemmas}
\ \vspace{1mm}

An expression of a group $H$ as the product of two subgroups $A$ and $B$ of $H$ is called a \emph{factorization} of $H$, where $A$ and $B$ are called \emph{factors}.
The following lemma can be easily derived from~\cite[Lemma~2.2]{GX2018}.

\begin{lemma}\label{LemPrimeFactn}
Let $\Gamma$ be a connected $(G, 2)$-arc-transitive digraph, let $u\rightarrow v\rightarrow w$ be a $2$-arc of $\Gamma$, and let $|G_v|=\prod_{i=1}^n p_i^{f_i}$, where $p_1,\ldots,p_n$ are pairwise distinct primes and $f_1,\ldots,f_n$ are positive integers.
Then the following statements hold:
\begin{enumerate}[{\rm (a)}]
\item\label{LemPrimeFactnA} $G_v=G_{uv}G_{vw}$;
\item\label{LemPrimeFactnB} $|G_{uv}|=|G_{vw}|$ is divisible by $\prod_{i=1}^n p_i^{\lceil{f_i/2}\rceil}$;
\item\label{LemPrimeFactnC} $G_{uv}$ and $G_{vw}$ are conjugate in $G$ but not conjugate in $G_v$.
\end{enumerate}
\end{lemma}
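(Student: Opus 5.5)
We use the standard local structure of a $(G,2)$-arc-transitive digraph at $v$. Write $\Gamma^-(v)$ and $\Gamma^+(v)$ for the in- and out-neighbourhoods of $v$; then $u\in\Gamma^-(v)$ and $w\in\Gamma^+(v)$. We prove (a) first, then deduce (b) and (c) from it.

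For (a), note that $2$-arc-transitivity means $G$ is transitive on $2$-arcs. Hence $G_v$ is transitive on pairs $(u',w')$ with $u'\in\Gamma^-(v)$ and $w'\in\Gamma^+(v)$. In particular, $G_{uv}$ is transitive on $\Gamma^+(v)$. Since $G_v$ acts transitively on $\Gamma^+(v)$ with point stabilizer $G_{vw}$, the Frattini-type argument gives $G_v=G_{uv}G_{vw}$.

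For (b), we first show the two orders agree. Since $G$ is arc-transitive, some $g\in G$ maps the arc $(u,v)$ to the arc $(v,w)$. Then $G_{uv}^g=G_{vw}$, so $|G_{uv}|=|G_{vw}|$. From (a),
\[
|G_v|=\frac{|G_{uv}||G_{vw}|}{|G_{uv}\cap G_{vw}|},
\]
so $|G_v|$ divides $|G_{uv}|^2$. Now fix a prime $p_i$ and let $p_i^{e_i}$ be its exact power in $|G_{uv}|$. Divisibility gives $f_i\le 2e_i$, hence $e_i\ge\lceil f_i/2\rceil$, which is (b).

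For (c), conjugacy of $G_{uv}$ and $G_{vw}$ in $G$ is the element $g$ already found in (b). It remains to rule out conjugacy inside $G_v$. Suppose $G_{vw}=G_{uv}^h$ with $h\in G_v$. By (a), $G_v=G_{uv}G_{vw}$. A group is never the product of two conjugate proper subgroups, because $A\cdot A^h=A\cdot A^{a'}$ for suitable $a'$ and this lies in a union of fewer than $|G_v:A|$ cosets. More precisely, $|AA^h|=|A|^2/|A\cap A^h|$, and a union of conjugates of a proper subgroup never covers the group. So $G_{uv}=G_v$. Then $G_v$ fixes $u$, and by symmetry of the factorization also $G_{vw}=G_v$, so $G_v$ fixes $w$. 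Thus $|\Gamma^+(v)|=1$, which forces $\Gamma$ to be a directed cycle. In that case $G_v$ is trivial and the statement is vacuous when $n\ge1$. Hence for $n\ge1$, the subgroups $G_{uv}$ and $G_{vw}$ are not conjugate in $G_v$.

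The main point needing care is the connectedness-and-valency bookkeeping in (c). We use connectedness of $\Gamma$ to ensure that if $G_v$ fixes all of $\Gamma^+(v)$, then the vertex stabilizer is trivial and $\Gamma$ is a cycle.
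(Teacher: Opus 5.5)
Your proof is correct. The paper gives no argument of its own beyond saying the lemma ``can be easily derived from'' \cite[Lemma~2.2]{GX2018}. What you wrote is the standard self-contained derivation behind that citation, so it gains self-containment at no cost. Its three steps are:
\begin{enumerate}[(i)]
\item transitivity of $G_{uv}$ on $\Gamma^+(v)$ gives the factorization;
\item the count $|G_v|\cdot|G_{uv}\cap G_{vw}|=|G_{uv}|^2$ gives (b);
\item a group is never the product of two conjugate proper subgroups.
\end{enumerate}

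Two small points.

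First, your justification of the fact in (iii) does not prove it. The remark about ``fewer than $|G_v:A|$ cosets'' is not an argument. The covering fact for unions of conjugates does not apply, since $AA^h$ is not a union of conjugates of $A$. The clean argument is this. Suppose $G_v=AA^h$ with $h\in G_v$. Then $h\in AA^h$ gives $h=a\,h^{-1}a'h$ for some $a,a'\in A$. Hence $1=ah^{-1}a'$, that is, $h=a'a\in A$, and so $G_v=AA=A$.

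Second, to get $g$ with $(u,v)^g=(v,w)$ you need the arc $(v,w)$ to extend to a $2$-arc. This holds in the intended setting because $G$ is vertex-transitive, so $w$ has an out-neighbour. You were right to notice that (c) fails when $G_v=1$ (directed cycles), so the lemma implicitly needs $n\ge 1$; your handling of that case is fine.
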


A digraph $\Gamma$ is said to be \emph{$k$-regular} if the sets of the in-neighbors and the out-neighbors of $v$ have the same size $k$ for all $v\in V(\Gamma)$, and in this case, $k$ is called the \emph{valency} of $\Gamma$.

\begin{lemma}[{\cite[Lemma~2.13]{GLX2019}}]\label{LemVal}
Every vertex-primitive $1$-arc-transitive digraph is either a directed cycle of prime length or has valency at least $3$.
\end{lemma}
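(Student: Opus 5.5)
The statement is Lemma~\ref{LemVal}: a vertex-primitive $1$-arc-transitive digraph either is a directed cycle of prime length or has valency at least $3$. We may work with $G=\Aut(\Gamma)$, which is primitive on $V=V(\Gamma)$ and transitive on arcs. Since $G$ is vertex-transitive and arc-transitive, $\Gamma$ is $k$-regular for some $k$. We argue by cases on $k$.

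\emph{Valency $k=0$.} Then $\Gamma$ has no arcs. This is excluded when we regard a digraph as nontrivial. If the edgeless case has to be admitted, note that its automorphism group is $\Sy_n$, which is primitive, so we would need the convention that $\Gamma$ has at least one arc (as is implicit in the cited lemma).

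\emph{Valency $k\geq1$: connectivity.} Consider the partition of $V$ into the (weakly) connected components of $\Gamma$. This partition is $G$-invariant, so by primitivity either $\Gamma$ is connected or every component is a single vertex. The latter is impossible because $\Gamma$ has an arc. Hence $\Gamma$ is connected. For a finite vertex-transitive digraph with constant out- and in-valency, weak connectivity implies strong connectivity.

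\emph{Valency $k=1$.} Each vertex has exactly one out-neighbour and one in-neighbour, so the arc relation defines a permutation $\sigma$ of $V$. Since $\Gamma$ is connected, $\sigma$ is a single cycle, and $\Gamma$ is a directed cycle of length $n=|V|$; note $n\geq3$ because the relation is antisymmetric. Then $\Aut(\Gamma)=\Z_n$, acting regularly. A regular cyclic group is primitive only when $n$ is prime: for each divisor $d$ of $n$, the cosets of the subgroup of order $d$ form a block system. So $\Gamma$ is a directed cycle of prime length.

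\emph{Valency $k=2$.} This is the case we expect to be the main obstacle. Let $v\to u_1$ and $v\to u_2$ be the two out-arcs at $v$, and define a relation on $V$ by $u_1\sim u_2$ whenever $u_1,u_2$ are out-neighbours of a common vertex. More cleanly, consider the undirected graph $\Sigma$ on $V$ with $x$ adjacent to $y$ whenever $x,y$ share an in-neighbour. Every vertex has two in-neighbours, each contributing at most one $\Sigma$-neighbour, so $\Sigma$ has valency at most $2$ and is $G$-invariant.

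\emph{Case $\Sigma$ has valency $1$.} Then $\Sigma$ is a perfect matching, and its edges form a $G$-invariant partition into blocks of size $2$. Primitivity then forces $|V|=2$, but antisymmetry prevents a $2$-vertex digraph from having valency $2$, a contradiction.

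\emph{Case $\Sigma$ has valency $2$.} Then $\Sigma$ is a union of cycles. By primitivity, $\Sigma$ is connected and hence a single cycle $C_n$, so $G\leq\Aut(\Sigma)=D_{2n}$. A primitive subgroup of $D_{2n}$ requires $n=p$ prime. Then $G_v$ has order at most $2$, while arc-transitivity forces $2=k$ to divide $|G_v|$. So $G=D_{2p}$ and $G_v$ is a reflection swapping $u_1$ and $u_2$. Since $\Gamma$ is $G$-invariant, the out-neighbourhood of each vertex is a $\Sigma$-edge, giving a bijection between vertices and $\Sigma$-edges that commutes with $D_{2p}$. We would then check that such a digraph is forced to contain both $x\to y$ and $y\to x$ for some pair, or else admits the extra symmetries of a circulant, contradicting antisymmetry or the choice $G=\Aut(\Gamma)$. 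Concretely, $\Gamma$ is the circulant $\Cay(\Z_p,\{a,-a\})$, which is symmetric and therefore not a digraph. This contradiction completes the case analysis.
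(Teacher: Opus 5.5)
Your proof is correct. The paper gives no argument for this lemma; it is quoted from \cite[Lemma~2.13]{GLX2019}. So what you have is a self-contained, elementary substitute for the citation rather than a variant of a proof in the text.

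The cases $k=0$ (a matter of convention), $k=1$ (a regular $\Z_n$ is primitive only for $n$ prime) and the perfect-matching case of $\Sigma$ are fine. The graph $\Sigma$ of pairs with a common in-neighbour is the right device: it is $G$-invariant and has valency at most $2$, so by primitivity it is a cycle, which confines $G$ to $D_{2p}$. In effect you reprove, in this setting, the classical fact that a primitive group with a suborbit of length $2$ is dihedral of prime degree. Quoting that fact, applied to the $G_v$-orbit $\Gamma^+(v)$ of length $2$, would reduce the $k=2$ case to one line; your route keeps everything elementary.

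The last paragraph is hedged and should be tightened. The alternative you raise, ``admits the extra symmetries of a circulant, contradicting \ldots\ the choice $G=\Aut(\Gamma)$'', is not an argument and is not needed. Neither is the bijection between vertices and $\Sigma$-edges.

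Once $G=D_{2p}$, realise it as $\{t\mapsto\pm t+b\}$ acting on $\Z_p$. For distinct $u,w\in\Z_p$, the element $t\mapsto -t+u+w$ of $G$ swaps $u$ and $w$. Hence every $G$-invariant binary relation on $V$ is symmetric, so $u\to w$ forces $w\to u$, contradicting antisymmetry.

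Equivalently, $G_0=\langle t\mapsto -t\rangle$ is transitive on $\Gamma^+(0)$, so $\Gamma^+(0)=\{a,-a\}$ and $0\to a\to 0$. This is your ``concretely'' sentence with its justification filled in.
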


For a group $X$, let $\Rad(X)$ denote the largest solvable normal subgroup of $X$.

\begin{lemma}\label{LemRglr}
Let $G$ be a permutation group on a set $V$ such that each transitive core-free subgroup $Y$ of $G$ satisfies both of the following:
\begin{enumerate}[{\rm (a)}]
\item $(Y/\Rad(Y))^{(\infty)}$ is a nonabelian simple group of order greater than $|V|$;
\item there exists $p\in\pi(G)$ such that $|(Y/\Rad(Y))^{(\infty)}|_p=|G|_p=|V|_p$.
\end{enumerate}
Then there exists no regular subgroup of $G\wr\Sy_m$ for $m\geq2$, where $G\wr\Sy_m$ acts on $V^m$ in product action.
\end{lemma}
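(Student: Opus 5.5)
Assume for a contradiction that $R$ is a regular subgroup of $W=G\wr\Sy_m$ on $V^m$ with $m\geq2$, and let $B=G^m$ be the base group with projections $\pi_i\colon B\to G$. Set $K=R\cap B$. Since $R/K$ embeds in $\Sy_m$, $K$ is normal in $R$ of index dividing $m!$. First I will reduce to the case where $R$ acts transitively on the $m$ coordinates. If the induced group $R^{\Sy_m}$ has several orbits on $\{1,\ldots,m\}$, then $R$ embeds in a product $\prod_j (G\wr\Sy_{m_j})$ over the orbits. Regularity of $R$ on $V^m$ forces each projection of $R$ to be transitive on $V^{m_j}$. Comparing orders then gives $|R|=\prod_j|V|^{m_j}$, and a standard subdirect-product count shows that each projection is regular unless $m_j=1$. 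So I will argue by induction on $m$ together with this orbit decomposition. The key base case is $R$ transitive on coordinates, and there I aim to show that $K$ projects onto a transitive subgroup of $G$ in coordinate $1$.

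Next, consider $Y_i=\pi_i(K)\leq G$. Since $K$ is normal in $R$ and $R$ permutes coordinates transitively, the $Y_i$ are all conjugate in $G$ via elements of $G$. The $K$-orbits on $V^m$ are blocks for $R$, and since $R$ is regular their common size is $|K|$. The orbit of $K$ on $V^m$ lies inside $\prod_i (\text{$Y_i$-orbit})$. Counting gives $|K|=|R|/|R:K|\geq|V|^m/m!$. I would first try to show that $Y=Y_1$ is transitive on $V$. If it is not, then all its orbits have size at most $|V|/2$, so $|K|\leq (|V|/2)^m$. This must be set against $|V|^m/m!$, which does not immediately fail. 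To get a contradiction here I would use a $p$-part argument with the prime $p$ from hypothesis (b): a Sylow $p$-subgroup of $R$ has order $|V|_p^m$, while $|W|_p=|G|_p^m|m!|_p=|V|_p^m|m!|_p$. So the Sylow $p$-subgroup of $R$ is nearly all of a Sylow $p$-subgroup of $W$. This forces $K$ to contain a large $p$-part, and hence its projections $Y_i$ contain elements of $p$-power order.

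This leads to the main step. Once I have a transitive $Y\leq G$, I need to consider its core. If $Y$ is not core-free, I replace it by a suitable quotient, or note that the core is a normal subgroup of $G$; this case needs separate handling, perhaps by ruling it out or by passing through the core. If $Y$ is core-free, then by hypothesis (a) the simple group $S=(Y/\Rad(Y))^{(\infty)}$ satisfies $|S|>|V|$, and by (b) $|S|_p=|V|_p$. Now $K$ is a subdirect-type subgroup of $\prod Y_i$, and the nonabelian composition factors of $K$ include those coming from $S$ in each coordinate, with at least one copy overall. In the transitive case, because all coordinates are conjugate, $K$ has a section $S^{\ell}$ for some $\ell\geq1$ with $\ell\mid m$, arising as a diagonal over blocks of size $m/\ell$. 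Then $|K|\geq |S|^{\ell}$ times solvable parts, yet $|K|$ divides $|R|=|V|^m$.

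The contradiction should come from the prime $p$. The $p$-part of $S^\ell$ is $|V|_p^{\ell}$, and the whole $p$-part is controlled by $|V|_p^m$ together with the factor $|m!|_p$ from the permutation part. Meanwhile $|S|>|V|$ means $S^\ell$ contributes more than $|V|^\ell$. I expect the hardest part to be making this order comparison decisive, since the diagonal case $\ell<m$ gives only $|S|^\ell$ against $|V|^m$. I would attempt to show that regularity forces $\ell=m$, meaning $K$ projects onto $S$ independently in each coordinate. I would do this by observing that in a diagonal the stabilizer of a point is large, so the point stabilizer in $K$ would be nontrivial, contradicting regularity. With $\ell=m$ we get $|S|^m$ dividing $|K|\cdot|\text{solvable}|$ while $|K|\leq |V|^m<|S|^m$. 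I would then make this rigorous by counting composition factors: $|R|=|V|^m$, while $R$ must have $m$ composition factors isomorphic to $S$, each of order greater than $|V|$. That is impossible.
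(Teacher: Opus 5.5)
Your skeleton matches the paper's: intersect with the base group, get transitive coordinate projections, pass to the unique nonabelian factor $S$, use Scott's lemma to get $S^\ell$ with $\ell\mid m$, then count. But the two steps that carry the weight are not supplied, and the mechanism you propose for the second one is wrong.

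\textbf{Transitivity of $Y_1=\pi_1(K)$.} Your order count fails, as you note, and the $p$-part remark cannot rescue it. The prime in (b) is attached to a transitive core-free subgroup, which is exactly what you do not yet have. And knowing that $Y_i$ contains $p$-elements says nothing about transitivity. The paper imports this step from \cite[Proposition~2.5]{LPS2000}.

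A self-contained argument uses orbit partitions rather than orders. When $R$ is transitive on coordinates, the $Y_i$ are $G$-conjugate, so each has the same number $c$ of orbits on $V$. Since $R$ normalizes $K$, it permutes the $c^m$ boxes $O_1\times\cdots\times O_m$ (with $O_i$ a $Y_i$-orbit) transitively, while $K$ fixes each box. So $c^m$ divides $|R:K|$, which divides $m!$, and $v_r(m!)<m$ for every prime $r$ forces $c=1$.

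\textbf{Excluding $\ell<m$.} This is the more serious gap. It is false that a diagonal forces a nontrivial point stabilizer in $K$. For instance, $\{(s,s):s\in S\}$ acting on $V\times V$ has stabilizer $S_v\cap S_w$ at $(v,w)$, which is trivial whenever $S$ has a regular orbit on ordered pairs. The paper kills the diagonal case with a $p$-adic count whose ingredients you listed but aimed at the wrong step:
\begin{itemize}
\item since $|S|_p=|G|_p\ge|Y_i|_p$, both $\Rad(Y_i)$ and $(Y_i/\Rad(Y_i))/S$ are $p'$-groups, so $|K|_p=|S|_p^{\ell}=|V|_p^{\ell}$ exactly;
\item regularity gives $|R|_p=|V|_p^m$;
\item $|R:K|$ divides $m!$.
\end{itemize}
Hence $|V|_p^{m-\ell}\le(m!)_p$ with $m-\ell\ge m/2$. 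This contradicts $v_p(m!)<m/(p-1)$; for $p=2$, use $|V|_2=|S|_2\ge4$. Only after this does your final count $|S|^m>|V|^m$ apply.

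\textbf{Two lesser points.} First, your reduction to the coordinate-transitive case rests on the unjustified claim that the projections of $R$ onto orbit blocks are regular. They are transitive, but may be larger than $|V|^{m_j}$. The reduction also leaves untouched the case $R\le G^m$ with all orbits trivial. A sound reduction: for an orbit $\Delta$ and a point $x\in V^{\Delta}$, the stabilizer of $x$ in $R$ embeds as a regular subgroup of $G\wr\Sy_{m-|\Delta|}$. Induction on $m$ then leaves only the coordinate-transitive case and $R\le G\times G$, both handled by the $p$-count. The paper instead works orbit by orbit via $\varphi$, and its passage from the per-orbit bound to $|R|\ge|T|^m$ tacitly needs such a step.

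Second, the non-core-free case you flag cannot be settled from (a) and (b) as literally stated. The group $G=\Z_p$ acting regularly on $p$ points satisfies them vacuously, yet $G^m$ is regular on $V^m$. The paper's proof applies (a) and (b) to the projections without checking core-freeness; in its applications $G$ is almost simple. So this is not where your proposal falls short.
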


\begin{proof}
Suppose for a contradiction that $G\wr\Sy_m$ has a regular subgroup $R$.
Let $\sigma\colon G\wr\Sy_m\to\Sy_m$ be the natural homomorphism modulo the base group $G^m$, and let $\rho_i\colon G^m\to G$ be the projection into the $i$-th direct factor.
Take an arbitrary orbit $\Delta$ of $R^\sigma$ on $\{1,\ldots,m\}$, and without loss of generality, assume $\Delta=\{1,\ldots,k\}$.
Then $R\leq G\wr(\Sy_k\times\Sy_{m-k})$.
Define a homomorphism
\[
\varphi\colon G\wr(\Sy_k\times\Sy_{m-k})\to G\wr\Sy_k,\ \ (g_1,\ldots,g_m)\alpha\beta\mapsto(g_1,\ldots,g_m)\alpha,
\]
where $(g_1,\ldots,g_m)\in G^m$, $\alpha\in\Sy_k$ and 
$\beta\in\Sy_{m-k}$.
By~\cite[Proposition~2.5]{LPS2000}, for each $i\in\{1,\ldots,k\}$, the group $R_i\coloneqq(R^\varphi\cap G^k)^{\rho_i}$ is transitive on $V(\Sigma)$.
Then each $R_i$ has a unique nonsolvable composition factor $T_i$, and $R^\varphi\cap G^k$ is a subdirect subgroup of $R_1\times\cdots\times R_k$.
Moreover, since $R^\sigma$ is transitive on $\Delta$, the groups $R_1,\ldots,R_k$ are isomorphic.

Let $N=\Rad(R_1\times\cdots\times R_k)$.
Then $R^\varphi\cap N\trianglelefteq R^\varphi\cap(R_1\times\cdots\times R_k)=R^\varphi\cap G^k$, and for each $i\in\{1,\ldots,k\}$,
\begin{align*}
\Big(\big((R^\varphi\cap G^k)\big/(R^\varphi\cap N)\big)^{(\infty)}\Big)^{\rho_i}
&=\big((R^\varphi\cap G^k)^{(\infty)}\big/(R^\varphi\cap G^k)^{(\infty)}\cap(R^\varphi\cap N)\big)^{\rho_i}\\
&=\big((R^\varphi\cap G^k)^{(\infty)}\big)^{\rho_i}\big/\big((R^\varphi\cap G^k)^{(\infty)}\cap N\big)^{\rho_i}\\
&=(R_i)^{(\infty)}\big/\big((R_i)^{(\infty)}\cap\Rad(R_i)\big)
=T_i.
\end{align*}
Thus, $\left((R^\varphi\cap G^k)\big/(R^\varphi\cap N)\right)^{(\infty)}$ is a subdirect subgroup of $T_1\times\cdots\times T_k$.
Since $R^\sigma$ is transitive on $\Delta$, it follows from Scott's Lemma (see, for example,~\cite[Theorem~4.16]{PS2018}) that the unique nonsolvable composition factor $T$ of $R^\varphi\cap G^k$ has multiplicity $\ell$ dividing $k$, where $T\cong T_1\cong\cdots\cong T_k$.
In particular, $|T|=|(R_1/\Rad(R_1))^{(\infty)}|>|V|$.

Suppose $\ell<k$.
Then $k\geq2$ and $\ell\leq k/2$.
Since $R^\varphi$ is transitive on $V^k$ and $|G|_p=|V|_p$,
\[|R^\varphi|_p
\geq|V|_p^k
=|G|_p^k.
\]
On the other hand, since $|G|_p=|T|_p$, we have $|R_1|_p=\cdots=|R_k|_p=|T|_p$, and so
\[
|R^\varphi|_p
\leq|R^\varphi\cap G^k|_p(k!)_p
=|T|_p^\ell(k!)_p
=|G|_p^\ell(k!)_p.
\]
Hence $(k!)_p\geq|G|_p^{k-\ell}\geq|G|_p^{k/2}$, which is impossible.

Therefore, $\ell=k$, and so $|R^\varphi\cap G^k|\geq|T|^k$.
Since $\Delta$ is taken to be an arbitrary orbit of $R^\sigma$ on $\{1,\ldots,m\}$, we conclude that $|R|\geq|T|^m>|V|^m$, contradicting that $R$ is regular.
This completes the proof.
\end{proof}

For integers $a\geq 2$ and $m\geq 2$, a prime number $r$ is called a \emph{primitive prime divisor} of the pair $(a,m)$ if $r$ divides $a^m-1$ but does not divide $a^i-1$ for any positive integer $i<m$.
The following result is the famous Zsigmondy's theorem.

\begin{theorem}[{\cite[Theorem IX.8.3]{BH1982}}]\label{BH1982}
Let $a\geq 2$ and $m\geq 2$ be integers.
Then $(a,m)$ has a primitive prime divisor if and only if either $a+1$ is a power of $2$ and $m=2$, or $(a,m)=(2,6)$.
\end{theorem}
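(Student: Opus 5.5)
The displayed equivalence is really a description of the \emph{exceptions}. It should be read as saying that $(a,m)$ has \emph{no} primitive prime divisor if and only if either $a+1$ is a power of $2$ and $m=2$, or $(a,m)=(2,6)$. Indeed $2^6-1=63=3^2\cdot 7$, and $3\mid 2^2-1$, $7\mid 2^3-1$. I would prove it in this form using cyclotomic polynomials. Write $a^m-1=\prod_{d\mid m}\Phi_d(a)$. Any primitive prime divisor of $(a,m)$ must divide $\Phi_m(a)$. Conversely, a prime $r\mid\Phi_m(a)$ is primitive exactly when the multiplicative order of $a$ modulo $r$ equals $m$. Everything therefore reduces to controlling the non-primitive prime divisors of $\Phi_m(a)$.

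\textbf{Step 1: the non-primitive primes.} Suppose $r\mid\Phi_m(a)$ and $a$ has order $e<m$ modulo $r$. I would show that $m=e r^j$ with $j\ge1$, so $r$ divides $m$. Since $e\mid r-1$, this forces $r$ to be the largest prime divisor of $m$. Hence at most one non-primitive prime can divide $\Phi_m(a)$. By the lifting-the-exponent lemma, $v_r(a^{er^j}-1)=v_r(a^e-1)+j$, at least when $r$ is odd or $4\mid a^e-1$. This gives $v_r(\Phi_m(a))=1$ except in the case $r=2$, $m=2$.

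\textbf{Step 2: the size comparison.} If $(a,m)$ has no primitive prime divisor, Step 1 gives $\Phi_m(a)\le r\le m$ whenever $m\ge3$. On the other hand $\Phi_m(a)=\prod|a-\zeta|$ over primitive $m$-th roots of unity $\zeta$. Each factor satisfies $|a-\zeta|>a-1\ge1$, so $\Phi_m(a)>(a-1)^{\varphi(m)}$. For $a\ge3$ this already contradicts $\Phi_m(a)\le m$ once $m\ge3$, using the fact that $2^{\varphi(m)}$ exceeds the largest prime factor of $m$. For $a=2$ I would use the sharper bound $|2-\zeta|\ge|2-e^{2\pi i/m}|$ for each such $\zeta$. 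This bound is strictly larger than $1$ except for roots near $1$, and it yields $\Phi_m(2)>m$ except for finitely many small $m$. Those values I would simply check by hand, and only $m=6$ survives, since $\Phi_6(2)=3$.

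\textbf{Step 3: the case $m=2$.} Here $\Phi_2(a)=a+1$, and the only possible non-primitive prime is $2$. So $(a,2)$ has no primitive prime divisor exactly when $a+1$ is a power of $2$. Conversely, all the listed exceptions are verified directly.

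I expect the main obstacle to be the $a=2$ case of Step 2. There the crude bound $(a-1)^{\varphi(m)}=1$ is useless, and a careful estimate of $\prod|2-\zeta|$ together with a finite check of small $m$ is needed. Alternatively, since the paper uses this only as a cited tool, one may simply invoke \cite[Theorem IX.8.3]{BH1982}.
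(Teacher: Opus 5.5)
Your reading of the statement is right. As printed it is false ($7$ is a primitive prime divisor of $(2,3)$), and the intended content is the list of pairs with \emph{no} primitive prime divisor. The paper gives no proof of its own and only cites Blackburn--Huppert. Step~1, Step~3 and the case $a\ge3$ of Step~2 are sound, with two small remarks:
\begin{itemize}
\item Step~1 gives $\Phi_m(a)\le P(m)$, the largest prime factor of $m$, and that is the quantity you must beat, not $m$ (note $2^{\varphi(6)}=4<6$).
\item For $r=2$ and $a\equiv3\pmod 4$ your LTE hypothesis fails. However, $v_2(\Phi_{2^j}(a))=v_2(a^{2^{j-1}}+1)=1$ for $j\ge2$ directly.
\end{itemize}
The genuine gap is the case $a=2$ of Step~2. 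The bound $|2-\zeta|\ge|2-e^{2\pi i/m}|$ is uniform in $\zeta$, so all it yields is
\[
\Phi_m(2)\ \ge\ \bigl(5-4\cos(2\pi/m)\bigr)^{\varphi(m)/2}.
\]
Since $1-\cos x\le x^2/2$, the right-hand side is at most $(1+8\pi^2/m^2)^{m/2}\le e^{4\pi^2/m}$, which tends to $1$. So it never exceeds $P(m)$ once $m$ is large. It already fails at $m=10,12,13$; for $m=13$ it gives about $9.6$. Your claim that only finitely many small $m$ remain is therefore false, and no finite check can close the argument this way. What is missing is an estimate that uses the fact that only boundedly many of the $\varphi(m)$ primitive roots lie near $1$.

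Either of the following repairs works.
\begin{itemize}
\item \emph{M\"obius bound.} From $\Phi_m(2)=\prod_{d\mid m}(2^d-1)^{\mu(m/d)}=2^{\varphi(m)}\prod_{d\mid m}(1-2^{-d})^{\mu(m/d)}$ you get $\Phi_m(2)\ge2^{\varphi(m)}\prod_{d\ge1}(1-2^{-d})>2^{\varphi(m)-2}$. For $m\ge3$, $\varphi(m)$ is even and $\varphi(m)\ge P(m)-1$, so this exceeds $P(m)$ unless $m\in\{3,4,5,6,8,10,12\}$. Checking these directly leaves only $\Phi_6(2)=3$.
\item \emph{Reduction via Step~1.} Use the structure $m=er^j$ with $e\mid r-1$. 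Then $\Phi_m(a)=\Phi_{er}(b)$ with $b=a^{r^{j-1}}$, and
\[
\Phi_{er}(b)=\frac{\Phi_e(b^r)}{\Phi_e(b)}\ \ge\ \Bigl(\frac{b^r-1}{b+1}\Bigr)^{\varphi(e)}.
\]
For $m\ge3$ this exceeds $r$ unless $(b,r)=(2,3)$, that is, $(a,m)\in\{(2,3),(2,6)\}$. Since $\Phi_3(2)=7$, only $(2,6)$ survives.
\end{itemize}
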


For a prime power $a=p^f$ and an integer $m\geq2$, where $p$ is a prime and $f$ is a positive integer, let $\ppd(a,m)$ denote the set of primitive prime divisors of $(p,fm)$, and set $\ppd(2,6)=\{7\}$.
Note from Fermat's Little Theorem that for each $r\in\ppd(a,m)$, we have $r\equiv 1\pmod{fm}$, and so $r>fm$.

\subsection{Strategy to prove Theorem~\ref{ThmMain}}
\ \vspace{1mm}

As outlined in the Introduction, the proof of Theorem~\ref{ThmMain} proceeds as follows.
Let $\Gamma$ be a $G$-vertex-primitive $(G,2)$-arc-transitive Cayley digraph that is not a directed cycle, and let $v$ be a vertex of $\Gamma$.
By~\cite[Theorem]{LPS1988}, the primitive type of $G$ is one of \AS, \SD, \CD, or \PA, which we analyze separately.
Among others, an important tool in our analysis is the characterization of the vertex-stabilizer $G_v$, which admits a factorization as in Lemma~\ref{LemPrimeFactn}.
Furthermore, in the case where $(G_v)^{(\infty)}$ is quasisimple, this factorization is described in~\cite[Lemma~3.3]{YFX2023}.

In Section~\ref{SecASSDCD}, we investigate the cases where $G$ has type \AS, \SD, or \CD.
First assume that $G$ is of type \AS.
Since the primitive group $G$ has a regular subgroup $R$, the possible triples $(G,G_v,R)$ can be read off from~\cite[Tables~16.1--16.3]{LPS2010}.
We show that this case is impossible for the infinite families in Lemma~\ref{LemASInf} and the remaining triples in Lemma~\ref{LemASFin}, combined with computation in \magma\cite{magma} when necessary.
Therefore, there exists no $G$-vertex-primitive $(G,2)$-arc-transitive Cayley digraph with $G$ almost simple.
Next assume that $G$ is of type \SD or \CD.
By~\cite[Theorem~1.2~and~Corollary~1.4]{GX2018}, a $G$-vertex-primitive $(G,2)$-arc-transitive digraph where $G$ is of type \SD or \CD is isomorphic to the direct product of $m$ copies of $\Gamma(T)$ given in Example~$\ref{ExampleSD}$, and such a digraph is not $3$-arc-transitive.
We prove that all such digraphs are Cayley, leading to a classification of \SD or \CD type vertex-primitive $2$-arc-transitive Cayley digraphs in Proposition~\ref{PropSDCD}.

In Section~\ref{SecPA}, we assume that $G$ is of type \PA.
We first show that, by~\cite[Corollary~3(iv)]{LPS2000}, it suffices to consider $G$-vertex-primitive $(G,2)$-arc-transitive digraphs where $G$ is almost simple and contains a transitive core-free subgroup $X$.
The existence of $X$ restricts the candidates for the pair $(G,G_v)$, which can be determined using~\cite[Corollary]{LPS1996}.
Moreover, if $(G_v)^{(\infty)}$ is quasisimple, then~\cite[Lemma~3.3]{YFX2023} provides an even stronger restriction.
We observe that for most of the candidates, the group $X$ has a unique nonsolvable composition factor $T$, and $T$ satisfies the condition that $|T|>|G|/|G_v|$ and that there exists a prime $p\in\pi(G)\setminus\pi(G_v)$ with $|T|_p=|G|_p$.
A key lemma, stated as Lemma~\ref{LemRglr}, then asserts that $\Aut(\Gamma)$ has no regular subgroups, yielding a contradiction.
For the single case where $(\Soc(G),(\Soc(G)_v)^{(\infty)})=(\PSL_4(q),\PSp_4(q))$, which cannot be ruled out through this method as we cannot assure that $X$ is nonsolvable, we prove that $G_{uv}$ and $G_{vw}$ are not conjugate in $G$.
This is achieved by showing that the centralizers of $(G_{uv})^{(\infty)}$ and $(G_{vw})^{(\infty)}$ in $G^{(\infty)}$ are not isomorphic, which contradicts Lemma~\ref{LemPrimeFactn}\eqref{LemPrimeFactnC}.
Summarizing these results, we conclude that $G$ cannot be of type \PA, which is formulated as Proposition~\ref{PropPA}.

\section{Types \AS, \SD and \CD}\label{SecASSDCD}

Throughout this section, let $\Gamma=\Cay(R,S)$ be a $(G,s)$-arc-transitive Cayley digraph that is not a directed cycle, where $s\geq 2$ and $G$ is an almost simple group acting primitively on $V(\Gamma)$, let $L=\Soc(G)$, and let $v\in V(\Gamma)$.

We begin with the \AS type.
According to~\cite[Theorem~1.1]{LPS2010}, the triple $(G,G_v,R)$ lies in~\cite[Tables~16.1--16.3]{LPS2010}.
We deal with the infinite families in these tables in Lemma~\ref{LemASInf} and the remaining candidates in Lemma~\ref{LemASFin}.

\begin{lemma}\label{LemASInf}
Suppose that $(G,G_v,R)$ is as described in rows~\emph{1--2} of~\cite[Table~16.1]{LPS2010} or rows~\emph{1--4} of~\cite[Table~16.2]{LPS2010}.
Then $s=1$.
\end{lemma}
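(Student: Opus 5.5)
The plan is to argue by contradiction: assume $s\geq 2$ (the standing hypothesis of this section) and show that $(G,G_v,R)$ cannot lie in any of the listed rows. The rows are handled one by one. The tools are the orbital structure of the primitive action and Lemma~\ref{LemPrimeFactn} applied to the stabiliser $G_v$.

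\textbf{Step 1: self-paired orbitals.} For each listed row I first identify the action of $G$ on $V(\Gamma)$. As I recall these infinite families, they are:
\begin{itemize}
\item $\A_n$ or $\Sy_n$ on points or on $k$-subsets;
\item $\PSL_n(q)$ (and overgroups) on points or hyperplanes of the projective space, with a Singer-type regular subgroup;
\item actions of a few classical groups on certain totally singular or nonsingular subspaces.
\end{itemize}
I expect most of these to be eliminated at once, because $\Gamma$ is an arc-transitive digraph and so $\rightarrow$ is a non-self-paired orbital of $G$. If $G$ is $2$-transitive, the unique nontrivial orbital is self-paired. For $\A_n$ or $\Sy_n$ on $k$-subsets, each orbital is determined by the intersection size $|A\cap B|$, which is symmetric in $A$ and $B$, so again every orbital is self-paired. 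The same kind of symmetric invariant (the dimension of intersection, or the isometry type of the span) shows that every orbital is self-paired for the subspace actions. In each such row no antisymmetric relation $\rightarrow$ is $G$-invariant, which is a contradiction. Before relying on this I would check that $G$ contains no graph automorphism; when a duality (such as points versus hyperplanes) is present, $G$ does not act on the set considered here, so the check should be straightforward.

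\textbf{Step 2: rows with a non-self-paired orbital.} If some row survives Step~1, I use Lemma~\ref{LemPrimeFactn}. It gives $G_v=G_{uv}G_{vw}$ with $|G_{uv}|=|G_{vw}|$, where $G_{uv}$ and $G_{vw}$ are conjugate in $G$ but not in $G_v$, and $|G_{uv}|$ is divisible by $\prod p_i^{\lceil f_i/2\rceil}$. I then plan to:
\begin{enumerate}
\item take a primitive prime divisor $r\in\ppd(q,m)$ of $|G_v|$, using Theorem~\ref{BH1982} and choosing $m$ maximal, with $|G_v|_r=r$;
\item deduce that $r$ divides both $|G_{uv}|$ and $|G_{vw}|$;
\item restrict the pair $(G_{uv},G_{vw})$ using the known maximal factorizations of the (almost simple or near-simple) group $G_v$, in the form of \cite[Lemma~3.3]{YFX2023} when $(G_v)^{(\infty)}$ is quasisimple;
\item show that in every remaining case the two factors are either conjugate in $G_v$ or of unequal orders, which contradicts Lemma~\ref{LemPrimeFactn}.
\end{enumerate}
For small parameters, where no suitable $r$ exists or the factorization tables have exceptions, I would verify the claim directly in \magma.

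\textbf{Main obstacle.} I expect the hard part to be Step~1 for rows where $G$ may contain extra outer automorphisms, or where the action is on a set of subspaces with more than one invariant. There one must confirm carefully that every orbital of $G$, and not merely of $L$, is self-paired. If that fails for some row, Step~2 (the factorization analysis) is the fallback and will need the most casework.
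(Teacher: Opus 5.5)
\textbf{There is a genuine gap: Step~1 fails for row~2 of \cite[Table~16.1]{LPS2010}, and your fallback Step~2 is not designed for that row.}

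Your Step~1 is sound for the $2$-transitive rows ($\A_n$ on $n$ points, $\PSL_n(q)$ on points or hyperplanes) and for $\A_q,\Sy_q$ on $2$-subsets. There it matches or simplifies the paper's argument.

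\textbf{Row~2 of Table~16.1.} Here $L=\PSL_2(q)$, $q\equiv 3\pmod 4$, $G_v\cap L=D_{q+1}$, and $G=L$ is allowed (the Borel subgroup is regular). Your claim that every orbital is self-paired is false in this case. Realise the vertices as pairs $\pm X$ of traceless $2\times 2$ matrices over $\mathbb{F}_q$ with $X^2=-I$, with $G$ acting by conjugation.
\begin{itemize}
\item For $q=11$, take $X=\bigl(\begin{smallmatrix}0&1\\-1&0\end{smallmatrix}\bigr)$ and $Y=\bigl(\begin{smallmatrix}1&3\\3&-1\end{smallmatrix}\bigr)$. Then $\mathrm{tr}(XY)=0$, so $XY=-YX$.
\item The stabiliser in $\PGL_2(11)$ of the ordered pair $(\pm X,\pm Y)$ is $\{1,X,Y,XY\}$, which lies in $\PSL_2(11)$.
\item Every element swapping $\pm X$ and $\pm Y$ lies in $(X+Y)\{1,X,Y,XY\}$. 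It therefore has determinant $\det(X+Y)=2$ modulo squares, a non-square.
\end{itemize}
So $\PSL_2(11)$ acting on the $55$ cosets of $D_{12}$ has a non-self-paired orbital, giving an arc-transitive digraph of valency~$3$.

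Your fallback does not reach this row either. Here $G_v\le \Z_{q+1}\rtimes\Z_{2f}$ (with $q=p^f$) is solvable. There is no almost simple factorisation to invoke, and \cite[Lemma~3.3]{YFX2023} is unavailable. The contradiction is also not of the form ``conjugate in $G_v$ or unequal orders''. The paper settles this row with \cite[Lemma~5.1]{GLX2019}.

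A direct argument also works:
\begin{itemize}
\item If $G_v$ is dihedral, two subgroups of equal order $m$ have rotation parts of order $m$ or $m/2$ in the same cyclic group. So they meet in at least $m/2$ elements, and $|G_v:G_{uv}|\le 2$, contrary to Lemma~\ref{LemVal}.
\item Otherwise $f>1$. Take $r\in\ppd(q,2)$, so $r\mid q+1$ and $r>2f$. The Sylow $r$-subgroup $P$ of $G_v$ is then cyclic and normal. Comparing $r$-parts in $|G_v|=|G_{uv}||G_{vw}|/|G_{uv}\cap G_{vw}|$ shows that $P$ is the unique Sylow $r$-subgroup of both $G_{uv}$ and $G_{vw}$.
\item Hence any $g$ with $G_{uv}^g=G_{vw}$ lies in $\Nor_G(P)=G_v$, since $\Cen_{\PGL_2(q)}(P)=\Z_{q+1}$. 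This contradicts Lemma~\ref{LemPrimeFactn}\eqref{LemPrimeFactnC}.
\end{itemize}

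\textbf{Table~16.2.} You have also misidentified the relevant rows of \cite[Table~16.2]{LPS2010}. Besides $(\A_q,\Sy_{q-2})$, they are $(\A_p,\Z_p.\Z_{(p-1)/2})$, $(\A_{p+1},\PSL_2(p))$ and $(\A_{p^2+1},\PSL_2(p^2).\Z_2)$. These are not subset actions, so Step~1 says nothing about them, and Step~2 needs arguments you have not supplied.
\begin{itemize}
\item For $\Z_p.\Z_{(p-1)/2}$: both factors contain the normal $\Z_p$ by Lemma~\ref{LemPrimeFactn}\eqref{LemPrimeFactnB}. They have equal images in a cyclic quotient, which forces $G_{uv}=G_{vw}=G_v$.
\item For the $\PSL_2$ stabilisers: use $p$-divisibility, plus some $r\in\ppd(p,4)$ in the $\PSL_2(p^2)$ case, together with Dickson's list of subgroups. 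Either both factors lie in Borel subgroups, whose product is too small, or both contain $\Soc(G_v)$, and then $g\in\Nor_G(\Soc(G_v))=G_v$.
\end{itemize}
The paper instead reduces these rows to types (a), (c), (f) of \cite{LPS1987}. It then cites \cite[Lemmas~3.2 and~3.3]{PWY2020} and \cite[Corollary~3.4]{GLX2019}.
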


\begin{proof}
Suppose for a contradiction that $s\geq 2$, so that $\Gamma$ is $(G,2)$-arc-transitive.

\smallskip
\noindent\textbf{Case~1:}
$L=\PSL_n(q)$, $L_v= P_1$ or $P_{n-1}$, and $R$
is metacyclic of order $(q^n-1)/(q-1)$, as in row~1 of~\cite[Table~16.1]{LPS2010}.
Since $G_v$ is a maximal parabolic subgroup, this is not possible by~\cite[Lemma~2.13]{YFX2023}.

\smallskip
\noindent\textbf{Case~2:}
$L=\PSL_2(q)$ and $G_v\cap L=D_{q+1}$ for $q\equiv 3\pmod{4}$, as in row~2 of~\cite[Table~16.1]{LPS2010}.
In this case, $G_v$ is a $\calC_3$-subgroup of~$G$.
However, since $\Gamma$ is $2$-arc-transitive, $G_v$ cannot be a $\calC_3$-subgroup of $G$ by~\cite[Lemma~5.1]{GLX2019}, a contradiction.

\smallskip
\noindent\textbf{Case~3:}
$L=\A_n$ and $L_v=\A_{n-1}$ for some $n\geq 5$, 
as in row~1 of~\cite[Table~16.2]{LPS2010}.
Then $|V(\Gamma)|=|L|/|L_v|=n$, and $G$ acts $2$-transitively on $V(\Gamma)$.
This implies that $\Gamma$ is a complete digraph, a contradiction.

\smallskip
\noindent\textbf{Case~4:}
$(L,G_v\cap L)=(\A_q,\Sy_{q-2})$, $(\A_p,\Z_p.\Z_{(p-1)/2})$, $(\A_{p+1},\PSL_2(p))$ or $(\A_{p^2+1},\PSL_2(p^2).\Z_2)$, as in rows~2--4 of~\cite[Table~16.2]{LPS2010}, where $q$ is a prime power and $p\geq 5$ is a prime.
Note that $G$ is an alternating or symmetric group whenever $L\ncong\A_6$.
Moreover, if $L=\A_6$, then $G_v\cap L=\PSL_2(5)$, and by~\cite[Page~4]{Atlas} we have $G\leq\Sy_6$ (since none of $\M_{10}$, $\PGL_2(9)$ or $\PGammaL_2(9)$ has a maximal subgroup whose intersection with $L$ is isomorphic to $\PSL_2(5)$).
Therefore, $G$ is an alternating or symmetric group.
According to the classification of maximal subgroups of the alternating and symmetric groups in~\cite[Theorem]{LPS1987}, we conclude that $G_v$ is of type~(a), (c), or~(f) of that theorem.
By~\cite[Lemmas~3.2 and~3.3]{PWY2020}, both types~(a) and~(c) yield $s=1$, while~\cite[Corollary~3.4]{GLX2019} shows that type~(f) cannot occur.
This completes the proof.
\end{proof}

Now we deal with the remaining cases in~\cite[Table~16.1--16.3]{LPS2010}.

\begin{lemma}\label{LemASFin}
Suppose that $(G, G_v, R)$ lies in~\cite[Tables~16.1--16.3]{LPS2010} except for rows~\emph{1--2} of~\cite[Table~16.1]{LPS2010} and rows~\emph{1--4} of~\cite[Table~16.2]{LPS2010}.
Then $s=1$.
\end{lemma}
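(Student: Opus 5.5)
We argue by contradiction: suppose $s\geq2$, so that $\Gamma$ is $(G,2)$-arc-transitive. Fix a $2$-arc $u\to v\to w$. The remaining entries of \cite[Tables~16.1--16.3]{LPS2010} form a finite list of triples $(G,G_v,R)$. These include the sporadic exceptions for classical groups in Table~16.1 (for example $\PSL_2(11)$, $\PSL_2(29)$, $\PSL_2(59)$, $\PSL_3(4)$, $\PSL_5(2)$, $\PSp_4(3)$, $\PSU_3(8)$, and some orthogonal groups of small rank), the alternating-group exceptions in Table~16.2 (such as $\A_9$ and $\A_{10}$ acting on small coset spaces), and the sporadic groups in Table~16.3 (for example $\M_{11}$, $\M_{12}$, $\M_{22}$, $\M_{23}$, $\M_{24}$, $\mathrm{J}_2$, $\mathrm{HS}$, $\mathrm{He}$, $\mathrm{Suz}$, $\Co_1$, $\Co_2$, $\Co_3$). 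For each entry we aim to show that $G_v$ has no factorization $G_v=G_{uv}G_{vw}$ satisfying all three conditions of Lemma~\ref{LemPrimeFactn}.

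The first filter is arithmetic. By Lemma~\ref{LemVal}, the valency $|G_v|/|G_{uv}|$ is at least $3$. Lemma~\ref{LemPrimeFactn}\eqref{LemPrimeFactnB} further requires that $|G_{uv}|$ be divisible by $\prod p_i^{\lceil f_i/2\rceil}$. We would read off the maximal subgroups $G_v$ from the Atlas and, for each one, list the subgroups $K<G_v$ whose order is divisible by $\prod p_i^{\lceil f_i/2\rceil}$. Among these we look for a pair $K$, $K'$ that are conjugate in $G$, not conjugate in $G_v$, and satisfy $G_v=KK'$.

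For almost simple $G_v$ with quasisimple $(G_v)^{(\infty)}$, the factorizations are governed by \cite[Lemma~3.3]{YFX2023}, and most cases die at once. A typical failure is that the only factorizations of the socle of $G_v$ have factors of different orders, or have factors that cannot be $G$-conjugate. Small examples include $\A_5=\A_4\cdot D_{10}$ and the factorizations of $\M_{11}$, $\M_{12}$, $\PSL_2(11)$. When $G_v$ is solvable or small, for instance a $2$-local or $3$-local subgroup, the $\lceil f_i/2\rceil$ condition forces $|G_{uv}|^2\geq|G_v|$. We can then check directly whether a factorization exists with two $G$-conjugate but not $G_v$-conjugate factors.

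Any case surviving these hand arguments will be settled in \magma. We would construct $G$ and $G_v$, enumerate the conjugacy classes of subgroups $K$ of $G_v$ meeting the order bound, and test the conditions $G_v=KK^g$ with $K^g\leq G_v$ and $K^g$ not $G_v$-conjugate to $K$. Here $g$ runs over representatives of the double cosets $G_v\backslash G/G_v$, since $w=v^{g}$-type conjugators suffice. For each such $g$ we also check the digraph condition $g^{-1}\notin G_vgG_v$.

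The main obstacle is the large sporadic entries, such as $\Co_1$ with $G_v=\Co_2$ or $\Co_3$-type subgroups, or $\M_{24}$. Enumerating all subgroup classes of $G_v$ is expensive there. For these I would instead use known factorization classifications of almost simple groups (Liebeck--Praeger--Saxl). These show that $\Co_2$, $\Co_3$, $\M_{23}$ and similar groups have either no factorizations or only factorizations with factors of unequal orders. That contradicts Lemma~\ref{LemPrimeFactn}\eqref{LemPrimeFactnB} and \eqref{LemPrimeFactnC} without any computation.
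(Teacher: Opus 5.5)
Your plan matches the paper's proof: assume $s\geq2$, use the factorization $G_v=G_{uv}G_{vw}$ with the order and conjugacy constraints of Lemma~\ref{LemPrimeFactn}, and eliminate the remaining finitely many table entries one at a time, partly by structural arguments and partly by \magma computations (the paper does $\Omega_8^+(2)$ and $\mathrm{He}$ by \magma, and handles $\Omega_8^+(4)$ by using \cite[Corollary~5]{LPS2000} to force $L_v\leq G_{uv}\cap G_{vw}$ and then $\Nor_G(L_v)=G_v$). Two small repairs are needed: in the computation, take the pair directly as $G_v\cap G_v^{g^{-1}}$ and $G_v\cap G_v^{g}$ for each double-coset representative $g$, because pairing a fixed class representative $K$ with a fixed $g$ can miss configurations; and in the large sporadic cases, unequal orders of maximal factors do not by themselves exclude equal-order subfactors, so argue instead from the full-prime-set condition $\pi(G_{uv})=\pi(G_{vw})=\pi(G_v)$ given by Lemma~\ref{LemPrimeFactn}\eqref{LemPrimeFactnB}.
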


\begin{proof}
Since $\Gamma$ is a $(G,2)$-arc-transitive digraph, $G_v$ has a factorization $G_v=G_{uv}G_{vw}$ as described in Lemma~\ref{LemPrimeFactn}.

\smallskip
\noindent\textbf{Case~1:}
$(L,L_v,R)=(\Omega_8^+(2),\Omega_7(2),\Sy_5)$, as in~\cite[Table~16.1]{LPS2010}.
Then $G=L.\calO$ with $L=\Omega_8^+(2)$, $L_v=\Omega_7(2)=\Sp_6(2)$, and $\calO \leq \Out(L)=\Sy_3$.
If $\calO=\Z_3$ or $\Sy_3$, inspection of the list of maximal subgroups of $G$ (see~\cite[Page~85]{Atlas}) shows that neither $\Omega_8^+(2).\Z_3$ nor $\Omega_8^+(2).S_3$ has a maximal subgroup whose intersection with $L$ is isomorphic to $\Sp_6(2)$.
Thus $\calO\leq\Z_2$, and hence $G_v=\Sp_6(2)\times\calO$ by~\cite[Page~85]{Atlas}.
Since $|L_v|=|\Sp_6(2)|=2^9 \cdot 3^4\cdot 5\cdot 7$, Lemma~\ref{LemPrimeFactn}\eqref{LemPrimeFactnB} implies that $2^5\cdot 3^2\cdot 5\cdot 7$ divides $|G_{uv}|=|G_{vw}|$.
Since $G_{uv}$ and $G_{vw}$ are not conjugate in $G_v$ by Lemma~\ref{LemPrimeFactn}\eqref{LemPrimeFactnC}, computations in \magma\cite{magma} show that there is no such factorization $G_v=G_{uv}G_{vw}$.

\smallskip
\noindent\textbf{Case~2:}
$(L,L_v,R)=(\Omega_8^+(4),\Omega_7(4),\PSL_2(16).\Z_4)$ with $G\geq L.\Z_2$, as in~\cite[Table~16.1]{LPS2010}.
Moreover, by~\cite[Table~8.50]{BHR2013}, we have $\calO\coloneqq G_v/L_v$ and hence $G_v=\Sp_6(4).\calO$ with $\calO\leq\Z_2^2$.
By Lemma~\ref{LemPrimeFactn}\eqref{LemPrimeFactnB}, $|G_{uv}|=|G_{vw}|$ is divisible by $2^9\cdot 3^2\cdot 5^2\cdot 7\cdot 13\cdot 17$.
Hence,~\cite[Corollary~5]{LPS2000} implies that both $G_{uv}$ and $G_{vw}$ contain $L_v=\Sp_6(4)$, whence $G_{uv}'=L_v=G_{vw}'$.
However, by Lemma~\ref{LemPrimeFactn}\eqref{LemPrimeFactnC}, there exists $g\in G\backslash G_v$ such that $G_{uv}^g=G_{vw}$.
It follows that $g\in\Nor_G(L_v)=G_v$, a contradiction.

\smallskip
\noindent\textbf{Case~3:}
$(L,L_v,R)=(\mathrm{He},\Sp_4(4).\Z_2, \Z_7^{1+2}.\Z_6)$, as in~\cite[Table~16.3]{LPS2010}.
Then $G=L.\calO$ with $\calO\leq\Z_2$.
Since $|L_v| = 2^9\cdot 3^2\cdot 5^2\cdot 17$, Lemma~\ref{LemPrimeFactn} gives that
\begin{equation}\label{eq1}
2^5\cdot 3\cdot 5\cdot 17\,\text{ divides }\,|G_{uv}|=|G_{vw}|.
\end{equation}
If $\calO=\Z_2$, then computation in \magma\cite{magma} shows that $G_v=\Aut(\Sp_4(4))$ has no non-conjugate subgroups $G_{uv}$ and $G_{vw}$ satisfying~\eqref{eq1}.
Thus $\calO=1$, and computation in \magma\cite{magma} shows that the only non-conjugate subgroups $G_{uv}$ and $G_{vw}$ in $G_v=\GammaSp_4(4)$ satisfying~\eqref{eq1} are not conjugate in $G$, contradicting Lemma~\ref{LemPrimeFactn}\eqref{LemPrimeFactnC}.

\smallskip
Similarly, we exclude the other cases, completing the proof.
\end{proof}

Combining Lemmas~\ref{LemASInf} and~\ref{LemASFin}, we obtain the following proposition.

\begin{proposition}\label{PropAS}
Let $\Gamma$ be a $G$-vertex-primitive $(G,2)$-arc-transitive Cayley digraph.
Then $G$ is not of type \AS.
\end{proposition}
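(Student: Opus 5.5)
The plan is to argue by contradiction, combining the two preceding lemmas with the Liebeck--Praeger--Saxl classification of primitive almost simple groups containing a regular subgroup. Suppose $G$ is of type \AS. Then $G$ is an almost simple group acting primitively on $V(\Gamma)$, and since $\Gamma$ is Cayley, $\Aut(\Gamma)$ contains a subgroup $R$ acting regularly on $V(\Gamma)$.

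The first point to settle is that the regular subgroup can be taken inside $G$ itself, so that~\cite[Theorem~1.1]{LPS2010} applies to the triple $(G,G_v,R)$. If necessary, I will replace $G$ by a suitable overgroup in $\Aut(\Gamma)$ that is still almost simple and primitive and contains $R$. I would justify this via the O'Nan--Scott type of $\Aut(\Gamma)\geq G$: a primitive overgroup of an almost simple primitive group is either almost simple or of product action type (by the classification of inclusions of primitive groups). The product action case is handled by the separate \PA analysis, Proposition~\ref{PropPA}. So the \AS case reduces to $G=\Aut(\Gamma)$, or more generally to $R\leq G$. Since $(G,2)$-arc-transitivity passes to overgroups inside $\Aut(\Gamma)$, nothing is lost in this replacement.

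Once $R\leq G$ with $R$ regular and $G$ almost simple primitive, \cite[Theorem~1.1]{LPS2010} places $(G,G_v,R)$ in~\cite[Tables~16.1--16.3]{LPS2010}. Every entry of those tables is covered either by Lemma~\ref{LemASInf} (rows~1--2 of Table~16.1 and rows~1--4 of Table~16.2) or by Lemma~\ref{LemASFin} (all remaining rows). Both lemmas conclude $s=1$, which contradicts the $(G,2)$-arc-transitivity. We also use Lemma~\ref{LemVal} together with the standing assumption that $\Gamma$ is not a directed cycle: a directed cycle of prime length would have regular cyclic $G$, which is not almost simple anyway.

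The main obstacle is the bookkeeping in the reduction step: making sure the group carrying the regular subgroup is the almost simple primitive group to which the tables apply. If the statement's intended reading already fixes $G=\Aut(\Gamma)$, which is the convention in the type analysis of Theorem~\ref{ThmMain}, this step is immediate. The rest is then a direct combination of the two lemmas.
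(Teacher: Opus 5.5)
Your proposal is correct and follows the paper, whose proof of this proposition is just the combination of \cite[Theorem~1.1]{LPS2010} with Lemmas~\ref{LemASInf} and~\ref{LemASFin}; the paper silently takes the regular subgroup $R$ to lie in $G$, which is automatic when $G=\Aut(\Gamma)$ as in the proof of Theorem~\ref{ThmMain}. Your extra reduction to $R\le G$ (with the \PA case sent to Proposition~\ref{PropPA}, whose proof does not use the present result) is a reasonable addition, but the inclusion claim is slightly off as stated: an almost simple primitive group can also lie in an affine primitive group (e.g.\ $\PSL_2(7)<\mathrm{AGL}_3(2)$ on $8$ points), and this is harmless here only because \cite[Theorem~3.1]{Praeger1989} forces the $2$-arc-transitive primitive group $\Aut(\Gamma)$ to be of type \AS, \SD, \CD or \PA.
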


Now we determine the upper bound on $s$ for $(G,s)$-arc-transitive digraphs such that $G$ acts primitively on $V(\Gamma)$ and is of type \SD or \CD.

\begin{proposition}\label{PropSDCD}
Let $\Gamma$ be a $G$-vertex-primitive Cayley digraph such that $G$ is of type \SD or \CD.
Then $\Gamma$ is $(G,2)$-arc-transitive if and only if $\Gamma$ is isomorphic to $\Gamma(T)^m$ for some nonabelian simple group $T$ and some positive integer $m$, where $\Gamma(T)$ is the digraph defined in Example~$\ref{ExampleSD}$.
Moreover, $\Gamma$ is not $3$-arc-transitive.
\end{proposition}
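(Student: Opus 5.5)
The plan is to treat the two directions separately, relying on the classification in \cite[Theorem~1.2 and Corollary~1.4]{GX2018} together with Lemma~\ref{SDisCayley}.

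For the ``only if'' direction, suppose $\Gamma$ is $G$-vertex-primitive and $(G,2)$-arc-transitive with $G$ of type \SD or \CD. Then \cite[Theorem~1.2 and Corollary~1.4]{GX2018} give directly that $\Gamma\cong\Gamma(T)^m$ for some nonabelian simple group $T$ and positive integer $m$. Type \SD corresponds to $m=1$, and type \CD to $m\geq2$, with $G$ embedded in $H\wr\Sy_m$ in product action for some \SD group $H$. The same results show that such digraphs are not $(G,3)$-arc-transitive. To get the stronger ``Moreover'' claim, namely that $\Gamma$ is not $3$-arc-transitive with respect to the full group $\Aut(\Gamma)$, I will cite the part of \cite{GX2018} stating that $\Gamma(T)^m$ itself is not $3$-arc-transitive. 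This is the analogue for $m\geq1$ of the statement in Lemma~\ref{SDisCayley}; it appears there as the conclusion of Corollary~1.4 for all vertex-primitive digraphs of this form. The Cayley hypothesis is not needed for this direction.

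For the ``if'' direction, suppose $\Gamma\cong\Gamma(T)^m$. I will show that $\Gamma$ is Cayley, vertex-primitive and $2$-arc-transitive, in three steps.
\begin{enumerate}
\item \emph{Cayley.} The regular subgroup $R<T^k$ of Lemma~\ref{SDisCayley} yields $R^m$, which acts regularly on $V(\Gamma(T))^m$. It preserves the product relation coordinatewise, so $R^m\leq\Aut(\Gamma)$ and $\Gamma$ is Cayley on $R^m$.
\item \emph{$2$-arc-transitive.} Let $X\leq\Aut(\Gamma(T))$ be the primitive group of \SD type that is $2$-arc-transitive on $\Gamma(T)$. The group $X\wr\Sy_m$ in product action preserves the direct product relation. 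A $2$-arc of $\Gamma^m$ is precisely an $m$-tuple of $2$-arcs of $\Gamma(T)$, so the base group $X^m$ is already transitive on $2$-arcs.
\item \emph{Primitive.} $X\wr\Sy_m$ is primitive in product action, since $X$ is primitive and not regular; it is of type \CD when $m\geq2$ and of type \SD when $m=1$. Taking $G=X\wr\Sy_m$ puts $\Gamma$ in the situation of the proposition. Alternatively, the ``if'' direction can be quoted from \cite[Corollary~1.4]{GX2018}, which already asserts that $\Gamma(T)^m$ is vertex-primitive and $2$-arc-transitive.
\end{enumerate}

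The main subtlety I expect is in matching the statement's quantifiers. The group $G$ in the statement is a fixed group of type \SD or \CD. In the ``if'' direction we must read the hypothesis as applying to a suitable $G$, namely the group exhibited above, rather than to an arbitrary $G$. We also need that $G$ acting $2$-arc-transitively implies the form $\Gamma(T)^m$ even when $G$ is a proper subgroup of $X\wr\Sy_m$. Both points I will take directly from \cite{GX2018}. The only genuinely new ingredient is the regular subgroup $R^m$ from step~1.
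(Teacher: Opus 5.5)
Your proposal is correct and follows essentially the paper's route. Like the paper, you cite \cite[Theorem~1.2 and Corollary~1.4]{GX2018} for the identification $\Gamma\cong\Gamma(T)^m$ and the failure of $3$-arc-transitivity, take the $m$-th power of the regular subgroup from Lemma~\ref{SDisCayley} to make $\Gamma(T)^m$ Cayley, and your explicit use of $X\wr\Sy_m$ in product action to certify primitivity and $2$-arc-transitivity just spells out what the paper compresses into ``it follows''.
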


\begin{proof}
By~\cite[Theorem~1.2~and~Corollary~1.4]{GX2018}, a $(G,2)$-arc-transitive digraph $\Gamma$ with $G$ primitive of type \SD or \CD is isomorphic to the direct product of $m$ copies of $\Gamma(T)$ given in Example~\ref{ExampleSD},
where $m$ is a positive integer and $T$ is a nonabelian simple group.
By Lemma~\ref{SDisCayley}, $\Gamma(T)$ is a $2$-arc-transitive Cayley digraph that is not $3$-arc-transitive.
In particular, $\Aut(\Gamma(T))$ contains a regular subgroup, say $H$.
It follows that $\Gamma(T)^m$ is $2$-arc-transitive but not $3$-arc-transitive, and $H^m$ is a regular subgroup of $\Aut(\Gamma(T)^m)$.
Therefore, $\Gamma$ is a $2$-arc-transitive Cayley digraph that is not $3$-arc-transitive, which completes the proof.
\end{proof}

\section{Type \PA}\label{SecPA}

To prove Theorem~\ref{ThmMain} for the \PA case, it suffices to prove the following proposition.

\begin{proposition}\label{PropPA}
Let $\Gamma$ be a $G_0$-vertex-primitive $(G_0,2)$-arc-transitive Cayley digraph.
Then $G_0$ is not of type \PA.
\end{proposition}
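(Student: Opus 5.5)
We argue by contradiction and follow the strategy outlined in Section~\ref{Sec2}. Suppose $G_0$ is of type \PA and acts primitively on $V(\Gamma)$, with $\Gamma$ $(G_0,2)$-arc-transitive, Cayley on $R$, and not a directed cycle. By Lemma~\ref{LemVal} the valency is at least $3$.

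\textbf{Reduction to a component.} We may take $G_0\le H\wr\Sy_m$ acting on $V(\Gamma)=\Delta^m$ in product action, where $m\ge2$ and $H$ is almost simple and primitive on $\Delta$. By~\cite[Corollary~3(iv)]{LPS2000}, the regular subgroup $R$ forces $H$ to contain a transitive core-free subgroup $X$; this is the group obtained by projecting $R\cap H^m$ to a coordinate. Results from the literature on \PA type (as in~\cite{GX2018}) show that the $2$-arc-transitive action passes to a $(G,2)$-arc-transitive digraph $\Sigma$ on $\Delta$, with $G$ almost simple and vertex-primitive, and $\Gamma$ essentially a product built from $\Sigma$. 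So $G_v$ admits the factorization of Lemma~\ref{LemPrimeFactn}. Moreover $G$ has a transitive core-free subgroup $X$, so $G=XG_v$ is a factorization with core-free factor $X$.

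\textbf{Case analysis via factorizations.} We then use~\cite[Corollary]{LPS1996} to list the pairs $(G,G_v)$ admitting such a factorization. We discard those excluded for $2$-arc-transitive digraphs by earlier work:
\begin{itemize}
\item parabolic stabilizers, by~\cite[Lemma~2.13]{YFX2023};
\item $\calC_3$-subgroups, by~\cite[Lemma~5.1]{GLX2019};
\item the alternating-group cases, by~\cite{PWY2020,GLX2019};
\item the cases where $(G_v)^{(\infty)}$ is quasisimple, using~\cite[Lemma~3.3]{YFX2023}.
\end{itemize}
For each surviving candidate, we check the hypotheses of Lemma~\ref{LemRglr} using the known structure of the transitive core-free subgroups $X$. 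Each such $Y$ should have a unique nonsolvable composition factor $T=(Y/\Rad(Y))^{(\infty)}$ with $|T|>|G:G_v|$. We also need a prime $p\in\pi(G)\setminus\pi(G_v)$ with $|T|_p=|G|_p$; we aim to find it as a primitive prime divisor from Theorem~\ref{BH1982}, for instance $p\in\ppd(q,n)$ for linear groups. Then $|V|_p=|G|_p$ automatically, since $p\nmid|G_v|$. Lemma~\ref{LemRglr} then says $G\wr\Sy_m$, and hence $G_0$, has no regular subgroup. This contradicts the assumption that $\Gamma$ is Cayley.

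\textbf{Main obstacle: $\PSL_4(q)$ with $\PSp_4(q)$.} The exceptional case is $(\Soc(G),(\Soc(G)_v)^{(\infty)})=(\PSL_4(q),\PSp_4(q))$. Here $X$ may be solvable, so Lemma~\ref{LemRglr} does not apply. Instead we attack the digraph directly.
\begin{enumerate}
\item Lemma~\ref{LemPrimeFactn}\eqref{LemPrimeFactnB} forces $G_{uv}$ and $G_{vw}$ to be large factors of $G_v$. Using the factorization classification for $\PSp_4(q)$, their $(\infty)$-parts are essentially $\Sp_2(q^2)$ and a parabolic-type or $\Sp_2(q)\times\Sp_2(q)$ subgroup.
\item We embed these subgroups in $\SL_4(q)$ and compute the centralizers of $(G_{uv})^{(\infty)}$ and $(G_{vw})^{(\infty)}$ in $G^{(\infty)}$. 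One acts irreducibly on the natural module, after extension to $\mathbb{F}_{q^2}$, and the other does not, so the centralizers should be non-isomorphic.
\item Hence $G_{uv}$ and $G_{vw}$ are not conjugate in $G$, contradicting Lemma~\ref{LemPrimeFactn}\eqref{LemPrimeFactnC}.
\end{enumerate}
The delicate points are pinning down the possible factorizations of $G_v$, including the outer automorphisms (graph and field) when $q$ is even, and computing the centralizers correctly.

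Combining all cases, $G_0$ is not of type \PA.
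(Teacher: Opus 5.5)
Your outline is the paper's own: reduction through \cite[Theorem~1.3]{GX2018} and \cite[Corollary~3(iv)]{LPS2000}, candidates from \cite[Corollary]{LPS1996}, Lemma~\ref{LemRglr} for most survivors, and a centralizer comparison for $(\PSL_4(q),\PSp_4(q))$. As written, however, it does not close.

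First, your list of cases ``excluded by earlier work'' is both overstated and incomplete.
\begin{itemize}
\item \textbf{Alternating socles.} These are not disposed of by \cite{PWY2020,GLX2019}. \cite[Theorem~1.3]{CCGLPX2025} still allows $G_v$ almost simple with socle $\Sp_4(2^f)$ or $\POmega_8^+(q)$, or of diagonal type $(T^k.(\Out(T)\times\Sy_k))\cap G$ with $n=|T|^{k-1}$. The paper removes these only via Lemma~\ref{LemRglr}, using $\A_{n-1}\le X\le\Sy_{n-1}$ together with either a prime $p\in(n/2,n)$ or a prime supplied by \cite[Corollary~5]{LPS2000}.
\item \textbf{$\calC_3$-subgroups.} \cite[Lemma~5.1]{GLX2019} concerns linear groups only. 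The symplectic and orthogonal $\calC_3$ survivors ($L_v=\Sp_4(q^r)$ and $(G_v)^{(\infty)}=\Omega_8^+(q^2)$) again need Lemma~\ref{LemRglr}, with $X$ identified via \cite{LX2022,LWX2024}.
\item \textbf{The missing vertex bound.} More importantly, you never use $|V(\Sigma)|\ge 30758154560$ from \cite[Theorem~1.1]{YFX2023}. The paper relies on it to eliminate every sporadic socle, and $\mathrm{G}_2(3)$ with $L_v=\SL_3(3)$, which survives \cite[Lemma~3.3]{YFX2023}. It also removes a series of small classical cases: $\PSU_4(8)$ with $N_1$, $\Sp_4(16)$ with $\Sp_4(4)$, $\Sp_{2m}(4)$ with $N_2$ and $m<6$, $\Omega_{10}^-(2)$ with $\A_{12}$, and $\POmega_8^+(3)$ with $\Z_2^6.\A_8$.
\end{itemize}
Without the bound you must check the hypotheses of Lemma~\ref{LemRglr} for \emph{every} transitive core-free subgroup in each such case, and they can fail outright. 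For example, for $\mathrm{He}$ with $G_v=\Sp_4(4).\Z_2$, the solvable group $\Z_7^{1+2}.\Z_6$ is regular, so Lemma~\ref{LemRglr} is unavailable. The paper removes that case in the \PA setting only through the order bound.

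Second, your description of the exceptional $(\PSL_4(q),\PSp_4(q))$ case is wrong. A parabolic subgroup of $\Sp_4(q)$, or $\Sp_2(q)\times\Sp_2(q)$, has order coprime to the primes in $\ppd(q,4)$. So Lemma~\ref{LemPrimeFactn}\eqref{LemPrimeFactnB} rules it out as $G_{uv}$ or $G_{vw}$. What \cite[Lemma~3.3]{YFX2023} gives is $q$ even and $(G_{uv})^{(\infty)}\cong(G_{vw})^{(\infty)}\cong\PSL_2(q^2)$. Via \cite[Theorem~A]{LPS1990} these are realised as the $\calC_3$-subgroup $\Sp_2(q^2)$ and the $\calC_8$-subgroup $\Omega_4^-(q)$ of $\Sp_4(q)$.

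This is exactly the difficulty: the two $(\infty)$-parts are abstractly isomorphic. No order or isomorphism-type argument separates them; only their embeddings in $\SL_4(q)$ do. Your irreducibility-after-extension criterion is the right tool, but it must be applied to \emph{this} pair.
\begin{itemize}
\item $\Sp_2(q^2)$ is irreducible but not absolutely irreducible on $\mathbb{F}_q^4$, so its centralizer in $L$ contains the image $\Z_{q+1}$ of $\mathbb{F}_{q^2}^\times$.
\item $\Omega_4^-(q)$ is absolutely irreducible, so its centralizer in $L$ is trivial.
\end{itemize}
This is the argument the paper makes.
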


Suppose for a contradiction that $\Gamma$ is a $G_0$-vertex-primitive $(G_0,2)$-arc-transitive Cayley digraph such that $G_0$ is of type \PA.
By~\cite[Theorem~1.3]{GX2018}, $\Gamma\cong\Sigma^m$ for some $(H,2)$-arc-transitive digraph $\Sigma$ and some integer $m\geq2$, where $H$ is primitive of type \AS such that $\Soc(H)^m\leq G_0\leq H\wr\Sy_m$.
Since $G_0$ has a regular subgroup, it follows from~\cite[Corollary~3(iv)]{LPS2000} that $G_0\leq G\wr\Sy_m$ in product action on $V(\Sigma)^m$, where $G$ is primitive on $V(\Sigma)$ of type \AS with socle $\Soc(H)$, and $G$ contains a transitive core-free subgroup.
Thus, we may proceed under the following hypothesis.

\begin{hypothesis}\label{Hyp}
Let $\Sigma$ be an $(H,2)$-arc-transitive digraph such that $H$ is vertex-primitive of type \AS, let $L=\Soc(H)$, and let $G$ be an almost simple primitive permutation group on $V(\Sigma)$ with socle $L$ such that $G$ has a transitive core-free subgroup $X$ and that $G\wr\Sy_m$ in product action on $V(\Sigma)^m$ has a regular subgroup.
Take $v\in V(\Sigma)$.
\end{hypothesis}

Under Hypothesis~\ref{Hyp}, it follows from~\cite[Corollary]{LPS1996} that $G_v=A$ or $B$ for some triple $(G,A,B)$ described in~\cite[Corollary~5 and~Tables~1--6]{LPS1990} or~\cite[Table~I]{LPS1996}.
We analyze these triples according to the type of the simple group $L$ in the subsequent subsections, which together constitute the proof of Proposition~\ref{PropPA}.
Recall the definition of $\calC_i$, $\mathcal{S}$, $P_i$, $N_i$, and $N_i^\pm$ given in~\cite[Pages~4--5]{LPS1990}.

\subsection{Non-classical groups}
\ \vspace{1mm}

In this subsection, we deal with non-classical groups $L$.
The cases where $L$ is an alternating group, an exceptional group of Lie type and a sporadic simple group are discussed in Lemmas~\ref{LemA},~\ref{LemExcep} and~\ref{LemSporadic} respectively.

\begin{lemma}\label{LemA}
Under Hypothesis~$\ref{Hyp}$, $L$ is not isomorphic to $\A_n$ with $n\geq 5$.
\end{lemma}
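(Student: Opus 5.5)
We argue by contradiction and suppose $L=\A_n$ with $n\geq5$. The aim is to pin down $G_v$ using Hypothesis~\ref{Hyp}, and then either exclude it by known results on $2$-arc-transitive digraphs for alternating groups, or exclude a regular subgroup of $G\wr\Sy_m$ via Lemma~\ref{LemRglr}.

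\textbf{Step 1: the list of candidates.} Since $G$ has a transitive core-free subgroup $X$, we have a factorization $G=XG_v$ with $X$ core-free. By~\cite[Corollary]{LPS1996} together with the classification of factorizations of alternating and symmetric groups in~\cite{LPS1990}, $G_v\cap L$ falls into one of the following classes:
\begin{enumerate}[{\rm (i)}]
\item $G_v\cap L$ is $k$-homogeneous-type, i.e.\ $\A_{n-k}\trianglelefteq G_v\cap L\leq(\Sy_{n-k}\times\Sy_k)\cap L$, with $X$ $k$-homogeneous on $n$ points;
\item $n=6,8,10,12,24$ or similar small exceptional cases (for example $G_v\cap L=\PSL_2(p)$ in $\A_{p+1}$, or $\M_{12}$, $\M_{24}$-related);
\item rows from~\cite[Table~I]{LPS1996} with small $n$.
\end{enumerate}

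\textbf{Step 2: remove the generic intransitive case.} Here $G_v$ is intransitive or a set-stabilizer, so it is of type~(a) in~\cite[Theorem]{LPS1987}. By~\cite[Lemmas~3.2 and~3.3]{PWY2020}, exactly as in Case~4 of Lemma~\ref{LemASInf}, such $G_v$ cannot be the vertex stabilizer of an $(H,2)$-arc-transitive digraph; here $H_v\cap L=G_v\cap L$ since $G$ and $H$ share socle and the point set. The same citations, together with~\cite[Corollary~3.4]{GLX2019}, also kill the primitive-type cases~(c) and~(f). This already removes most of the list. Also, $k=1$ gives a $2$-transitive action, so $\Sigma$ would be complete, which is impossible.

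\textbf{Step 3: finite leftovers.} For the remaining small-degree candidates, such as $\A_6,\A_8$ with $\PSL_2(5),\PSL_3(2)$-type stabilizers, and $\A_{p+1}\geq\PSL_2(p)$ for small $p$, we would first try Lemma~\ref{LemPrimeFactn}. We need a factorization $G_v=G_{uv}G_{vw}$ with non-conjugate factors of order divisible by $\prod p_i^{\lceil f_i/2\rceil}$; checking this by hand or in \magma should rule out most cases. If a case survives, we apply Lemma~\ref{LemRglr}: every transitive core-free $Y\leq G$ must have $(Y/\Rad(Y))^{(\infty)}$ simple of order exceeding $|V|$, and must capture the full $p$-part for some prime $p$ dividing $|V|$ but not $|G_v|$, typically $p=n$ or a large prime near $n$. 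This contradicts the existence of a regular subgroup of $G\wr\Sy_m$.

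The main obstacle is Step~1: extracting a clean, complete list from~\cite{LPS1996} and~\cite{LPS1990} for alternating socle. In particular, we must ensure that every case where $G_v$ is not of type~(a), (c) or~(f) of~\cite{LPS1987} is finite and small enough for computation or for Lemma~\ref{LemRglr}.
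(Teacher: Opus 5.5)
\textbf{There is a genuine gap: your case analysis of the factorization $G=XG_v$ leaves out the main case.}

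In Step~1(i) you only let $G_v$ be the factor containing $\A_{n-k}$, with $X$ being $k$-homogeneous. The reverse situation is the generic one. Whenever $G_v$ is transitive on $\{1,\ldots,n\}$, the point stabilizer $X=\Sy_{n-1}\cap G$ is core-free and transitive on $V(\Sigma)$. So the existence of $X$ places no restriction on a transitive $G_v$. This covers imprimitive, product-action, diagonal, affine and almost simple $G_v$, for every $n$.

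Step~2 does not rescue this. \cite[Corollary~3.4]{GLX2019} only handles the particular almost simple stabilizers arising in Lemma~\ref{LemASInf}, not type~(f) in general, and type~(d) is never addressed. The available classification, \cite[Theorem~1.3]{CCGLPX2025}, leaves two infinite families standing:
\begin{itemize}
\item $G_v$ almost simple with socle $\Sp_4(2^f)$ ($f\ge2$) or $\POmega_8^+(q)$;
\item $G_v$ of diagonal type with $n=|T|^{k-1}$.
\end{itemize}
So your expectation that everything outside types~(a), (c), (f) is finite and small is false. Neither Lemma~\ref{LemPrimeFactn} nor \magma can finish the argument.

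\textbf{What is needed.} The paper starts from \cite[Theorem~1.3]{CCGLPX2025} to reduce to these two families. Such a $G_v$ is primitive but not $2$-homogeneous on the $n$ points, by Kantor's theorem and the list of $2$-transitive groups. From $G=XG_v$ it then follows that \emph{every} transitive core-free subgroup satisfies $\A_{n-1}\le X\le\Sy_{n-1}$. This is exactly the uniform control over all transitive core-free $Y$ that Lemma~\ref{LemRglr} requires. Taking $T=\A_{n-1}$, one checks $|T|>|G:G_v|$ and finds $p\in\pi(G)\setminus\pi(G_v)$ with $|T|_p=|G|_p$:
\begin{itemize}
\item in the almost simple case, via \cite[Corollary~5]{LPS2000};
\item in the diagonal case, any prime in $(n/2,n)$ works.
\end{itemize}

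Your Step~3 invokes Lemma~\ref{LemRglr} only for ``leftovers'' and never identifies the subgroups $Y$. It also suggests $p=n$, which cannot work here: $G_v$ is transitive on $n$ points, so $n$ divides $|G_v|$. The lemma has to be applied to the infinite families, not to small cases.
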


\begin{proof}
Suppose for a contradiction that $L=\A_n$ with $n\geq 5$.
Since $\Sigma$ is $(G,2)$-arc-transitive,~\cite[Theorem~1.3]{CCGLPX2025} asserts that one of the following holds:
\begin{enumerate}[\rm(i)]
\item $T\trianglelefteq G_v\leq\Aut(T)$ with $T=\Sp_4(2^f)$ or $\POmega_8^+(q)$, where $f\geq2$ and $q$ is a prime power;
\item $G_v=(T^k.(\Out(T)\times\Sy_k))\cap G$ with $n=|T|^{k-1}$ for some nonabelian simple group $T$ and integer $k\geq2$.
\end{enumerate}
Since $X$ is a core-free subgroup of $G$ that is transitive on $V(\Sigma)$, we obtain $G=XG_v$, and it follows from~\cite[Tables~7.3--7.4]{Cameron1999} and~\cite[Theorem~1]{Kantor1972} that $\A_{n-1}\leq X\leq\Sy_{n-1}$.
In particular, $X$ has a unique nonsolvable composition factor $\A_{n-1}$.
In either case~(i) or~(ii), it is straightforward to verify that $|\A_{n-1}|>|G|/|G_v|$.
For case~(i), since $G$ is $\A_n$ or $\Sy_n$ while $n$ is the index of some maximal subgroup of $G_v$, it follows from~\cite[Corollary~5]{LPS2000} that there exists $p\in\pi(G)\setminus\pi(G_v)$, and so $p$ is an odd prime such that $|G|_p=|\A_{n-1}|_p$.
For case~(ii), any prime $p$ with $n/2<p<n$ satisfies $p\in\pi(G)\setminus\pi(G_v)$ and $|G|_p=|\A_{n-1}|_p$.
Therefore, it follows from Lemma~\ref{LemRglr} that $G\wr\Sy_m$ has no regular subgroups for either case, contradicting Hypothesis~\ref{Hyp}.
\end{proof}

\begin{lemma}\label{LemExcep}
Under Hypothesis~$\ref{Hyp}$, $L$ is not isomorphic to an exceptional group of Lie type.
\end{lemma}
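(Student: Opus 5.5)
We argue by contradiction, following the template of Lemma~\ref{LemA}. Suppose $L$ is an exceptional group of Lie type. Since $X$ is a core-free subgroup of $G$ that is transitive on $V(\Sigma)$, we have the maximal factorization $G=XG_v$ with $G_v$ core-free. By~\cite[Corollary]{LPS1996} together with~\cite[Tables~1--6]{LPS1990}, the exceptional almost simple groups admitting a factorization with both factors core-free are very restricted. Essentially only $L=\mathrm{G}_2(q)$ occurs, in which case $G_v\cap L$ is $\SL_3(q).2$, $\SU_3(q).2$, or (for $q=4$) a subgroup such as $\mathrm{J}_2$ or $\mathrm{G}_2(2)$; for $q=3^f$ there are also the graph-automorphism cases. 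We would list these candidates explicitly, together with the possible $X$.

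Next we use the $2$-arc-transitivity of $\Sigma$ to discard candidates. The paper records that the exceptional groups other than $\mathrm{E}_7,\mathrm{E}_8$ have been treated in~\cite{CGP2023,YC2026}, and those results restrict the vertex stabilizers of $G$-vertex-primitive $(G,2)$-arc-transitive digraphs. Since $\Sigma$ is $(H,2)$-arc-transitive with $\Soc(H)=L$ and $G$ has the same point stabilizer type, we would apply these results to $H$ and intersect with the list from the first step. We expect this to remove $\SL_3(q).2$ and $\SU_3(q).2$ directly. If it does not, we would use Lemma~\ref{LemPrimeFactn}, since a factorization $G_v=G_{uv}G_{vw}$ by non-conjugate subgroups of equal order, with $|G_{uv}|$ divisible by $\prod p_i^{\lceil f_i/2\rceil}$, is forbidden in $\SL_3(q).2$ by~\cite[Lemma~3.3]{YFX2023} applied to the quasisimple group $(G_v)^{(\infty)}$. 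Any small leftover cases with $q=3,4$ would be handled in \magma.

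For any survivors we would apply Lemma~\ref{LemRglr}. In the $\mathrm{G}_2(q)$ factorizations, $X$ satisfies $(X/\Rad(X))^{(\infty)}\cong\SU_3(q)$ or $\SL_3(q)$, the other of the two, or $\mathrm{J}_2$ when $q=4$; in each case this is simple of order exceeding $|V(\Sigma)|=|G:G_v|\approx q^3$. For the prime condition we would take $p\in\ppd(q,6)$ or $\ppd(q,3)$, chosen to divide $|X|$ but not $|G_v|$. Then $|G|_p=|V(\Sigma)|_p=|X|_p$, as Theorem~\ref{BH1982} gives, and Lemma~\ref{LemRglr} rules out a regular subgroup of $G\wr\Sy_m$, contradicting Hypothesis~\ref{Hyp}.

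The main obstacle is ensuring that every transitive core-free subgroup $Y$ of $G$, not just $X$, satisfies conditions (a) and (b) of Lemma~\ref{LemRglr}. This requires the full list of transitive core-free subgroups from~\cite{LPS1996} and a check that none is solvable. We would also need to verify that no small-$q$ exception, such as $\mathrm{G}_2(2)'$ or $\ppd$ failures, breaks the argument; such cases would be settled by direct computation.
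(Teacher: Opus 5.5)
\textbf{Verdict: the proposal has a genuine gap, at exactly the step where the paper's argument does its work.}

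You assert that \cite[Lemma~3.3]{YFX2023}, applied to the quasisimple group $(G_v)^{(\infty)}$, forbids the factorization $G_v=G_{uv}G_{vw}$ when $G_v\cap L=\SL_3(q).2$. It does not. The paper observes that every factorization $G=XG_v$ in \cite[Table~5]{LPS1990} has $(G_v)^{(\infty)}$ quasisimple. That lemma then removes everything \emph{except} $L=\mathrm{G}_2(3)$ with $L_v=\SL_3(3)$ or $\SL_3(3).2$.

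For this survivor you offer only an unspecified \magma computation or Lemma~\ref{LemRglr}. For the latter you leave open, as your ``main obstacle'', that \emph{every} transitive core-free subgroup of $G$ satisfies (a) and (b).

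The missing idea is the order bound \cite[Theorem~1.1]{YFX2023}: a vertex-primitive $2$-arc-transitive digraph that is not a directed cycle has at least $30758154560$ vertices. Here $|V(\Sigma)|\le|\mathrm{G}_2(3)|/|\SL_3(3)|=756$. With this the lemma takes three lines, and you need none of \cite{CGP2023,YC2026}, \magma, or Lemma~\ref{LemRglr}.

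The other steps also contain errors:
\begin{enumerate}[(i)]
\item Your candidate list is guessed rather than read off. To my recollection, \cite[Table~5]{LPS1990} also contains $L=\mathrm{F}_4(q)$ with $q$ even (factors $\Sp_8(q)$ and ${}^3\mathrm{D}_4(q).3$); check this entry in the table. Your Lemma~\ref{LemRglr} step, set up only for $\SL_3$, $\SU_3$ and $\mathrm{J}_2$, does not treat it, though the same quasisimplicity argument disposes of it.
\item The estimate $|V(\Sigma)|\approx q^3$ is wrong. In fact $|\mathrm{G}_2(q):\SL_3(q).2|=q^3(q^3+1)/2$ and $|\mathrm{G}_2(q):\SU_3(q).2|=q^3(q^3-1)/2$. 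Condition (a) still holds, but only because $|\SL_3(q)|$ and $|\SU_3(q)|$ are of order $q^8$, not for the reason you give.
\item The appeal to \cite{CGP2023,YC2026} is speculative. You do not say what those results yield, and you only ``expect'' them to remove the $\SL_3(q).2$ and $\SU_3(q).2$ cases.
\end{enumerate}

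The Lemma~\ref{LemRglr} route might be completed case by case, controlling all transitive subgroups via \cite{LPS1996}. As written, however, the proposal is not a proof.
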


\begin{proof}
Suppose for a contradiction that $L$ is an exceptional group of Lie type.
Since $X$ is a core-free subgroup of $G$ that is transitive on $V(\Sigma)$, the factorization $G=XG_v$ is described in~\cite[Table~5]{LPS1990} (see also~\cite[Theorem~1]{HLS1987}).
In particular, $(G_v)^{(\infty)}$ is quasisimple.
By~\cite[Lemma~3.3]{YFX2023}, $L=\mathrm{G}_2(3)$, and $L_v=\SL_3(3)$ or $\SL_3(3).\Z_2$.
However, $|G|/|G_v|\leq|\mathrm{G}_2(3)|/|\SL_3(3)|<30758154560$, contradicting~\cite[Theorem~1.1]{YFX2023}.
\end{proof}

\begin{lemma}\label{LemSporadic}
Under Hypothesis~$\ref{Hyp}$, $L$ is not isomorphic to a sporadic simple group.
\end{lemma}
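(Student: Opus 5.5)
We argue by contradiction: suppose $L$ is a sporadic simple group under Hypothesis~\ref{Hyp}. Since $X$ is a transitive core-free subgroup of $G$, we have a factorization $G=XG_v$ with $G_v$ maximal and $X$ core-free. Such factorizations of almost simple groups with sporadic socle are classified in \cite[Table~6]{LPS1990} (see also \cite[Table~I]{LPS1996} for the possibly non-maximal factors). We therefore read off the finite list of candidate pairs $(G,G_v)$, together with the possible $X$. Examples are $\M_{11}$ with $G_v=\M_{10}$ or $\PSL_2(11)$; $\M_{12}$ with $G_v=\M_{11}$; $\M_{23}$ with $G_v=\M_{22}$; $\M_{24}$ with $G_v=\M_{23}$, $\M_{12}{:}\Z_2$, $2^6{:}3.\Sy_6$ or $\PSL_2(23)$; $\mathrm{J}_2{:}\Z_2$; $\mathrm{HS}$ with $G_v=\M_{22}$ or $\mathrm{U}_3(5){:}\Z_2$; $\mathrm{He}{:}\Z_2$; $\mathrm{Ru}$ with $G_v={}^2\mathrm{F}_4(2)$; $\mathrm{Suz}$ with $G_v=\mathrm{G}_2(4)$; $\mathrm{Fi}_{22}$ with $G_v=2.\mathrm{U}_6(2)$; $\mathrm{Fi}_{23}$ with $G_v=2.\mathrm{Fi}_{22}$; $\mathrm{Co}_1$ with $G_v=\mathrm{Co}_2$ or $3.\mathrm{Suz}{:}2$; and $\mathrm{Co}_2$. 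The first step is to discard candidates using the $2$-arc-transitivity of $\Sigma$.

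For $2$-arc-transitivity we use three tools. First, when $(G_v)^{(\infty)}$ is quasisimple, \cite[Lemma~3.3]{YFX2023} gives the possible factorizations $G_v=G_{uv}G_{vw}$ from Lemma~\ref{LemPrimeFactn}. Second, \cite[Lemma~4.4]{YFX2023} already excludes most sporadic groups as the socle of a vertex-primitive $2$-arc-transitive digraph. Third, the lower bound \cite[Theorem~1.1]{YFX2023} on $|V(\Sigma)|=|G|/|G_v|$ removes every candidate of small index; this disposes of the Mathieu cases with $G_v=\M_{n-1}$, where $G$ is $2$-transitive, as well as many others. Any remaining small cases, such as solvable-type $G_v=2^6{:}3.\Sy_6$, we would treat as in Lemma~\ref{LemASFin}. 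That is, we compute in \magma the pairs of subgroups of $G_v$ that satisfy the order condition of Lemma~\ref{LemPrimeFactn}\eqref{LemPrimeFactnB}, are non-conjugate in $G_v$ but conjugate in $G$, and whose product is $G_v$. We expect no such pairs to exist.

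For any candidates that survive these checks, we apply Lemma~\ref{LemRglr}. Every transitive core-free subgroup $Y$ of $G$ satisfies $G=YG_v$, so by the same tables $Y$ has a unique nonabelian composition factor $T$; for instance $T=\PSL_2(11)$ in $\M_{11}$, and $T=\PSL_2(23)$ or $\M_{23}$ in $\M_{24}$. We then check two conditions. The first is $|T|>|G|/|G_v|$. The second is that some prime $p\in\pi(G)\setminus\pi(G_v)$ satisfies $|T|_p=|G|_p=|V(\Sigma)|_p$. Usually $p$ can be taken to be the largest prime divisor of $|G|$ that is missing from $|G_v|$, such as $23$ for $\M_{24}$ with $G_v=\M_{12}{:}\Z_2$, or $11$ and $7$ in the appropriate cases. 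Lemma~\ref{LemRglr} then shows that $G\wr\Sy_m$ has no regular subgroup, which contradicts Hypothesis~\ref{Hyp}.

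The main obstacle is the bookkeeping over the list. For each candidate we must confirm that every transitive core-free $Y$, and not just a single $X$, has a nonsolvable quotient $(Y/\Rad(Y))^{(\infty)}$ of the required kind; otherwise we must rule the case out through the factorization $G_v=G_{uv}G_{vw}$ and conjugacy in $G$. We would first try the bound from \cite[Theorem~1.1]{YFX2023} and \cite[Lemma~4.4]{YFX2023}, since we expect these to leave very few cases, and fall back on \magma computations only for the remainder.
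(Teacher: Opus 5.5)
Your first tool is, on its own, the paper's entire proof, and it already closes every case. For each maximal factorization in \cite[Table~6]{LPS1990}, the factor playing the role of $G_v$ satisfies $|G|/|G_v|<30758154560$, so \cite[Theorem~1.1]{YFX2023} gives the contradiction at once and no case survives. The paper states this only as a calculation. A quick check: the Mathieu groups and $\mathrm{J}_2$, $\mathrm{HS}$, $\mathrm{He}$, together with their automorphism groups, all have order below this bound. For the remaining socles $\mathrm{Ru}$, $\mathrm{Suz}$, $\mathrm{Fi}_{22}$, $\mathrm{Fi}_{23}$, $\Co_1$, the factors that occur have small index, for instance $11980800$ for $\PSL_2(29)<\mathrm{Ru}$ and $1545600$ for $3.\mathrm{Suz}{:}2<\Co_1$.

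So your route is the paper's, but as written you stop short of this finite verification (``we expect these to leave very few cases''). Instead you sketch fallback arguments via \cite[Lemma~3.3]{YFX2023}, \magma and Lemma~\ref{LemRglr}, none of which are needed.

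Those fallbacks would also not be valid as stated. \cite[Table~6]{LPS1990} lists only maximal factors, so it does not show that every transitive core-free $Y$ has a unique nonabelian composition factor. In fact $\Z_{11}{:}\Z_5<\M_{11}$ and $\Z_{23}{:}\Z_{11}<\M_{23}$ are solvable, core-free, and transitive on the cosets of $\M_{10}$ and $\M_{22}$ respectively, so hypothesis~(a) of Lemma~\ref{LemRglr} fails there. In addition, $\Co_2$ has no factorization at all and does not belong on your list.

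Replace everything after the index bound with the explicit check over that table.
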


\begin{proof}
Suppose for a contradiction that $L$ is a sporadic simple group.
Since $X$ is a core-free subgroup of $G$ that is transitive on $V(\Sigma)$, the factorization $G=XG_v$ is classified in~\cite[Table~6]{LPS1990}.
However, calculation shows that $|G|/|G_v|<30758154560$ for each case, which is a contradiction by~\cite[Theorem~1.1]{YFX2023}.
\end{proof}

\subsection{Linear groups}
\ \vspace{1mm}

In this subsection, we consider the case where $L$ is a linear group.

\begin{lemma}\label{LemL}
Under Hypothesis~$\ref{Hyp}$, $L$ is not isomorphic to $\PSL_n(q)$.
\end{lemma}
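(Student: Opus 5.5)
We argue by contradiction: assume $L=\PSL_n(q)$ under Hypothesis~\ref{Hyp}. Because $X$ is a transitive core-free subgroup of $G$, we have a factorization $G=XG_v$ with $G_v$ maximal and core-free. By~\cite[Corollary]{LPS1996}, the pair $(G_v,X)$ comes from the list in~\cite[Tables~1 and~3]{LPS1990} or~\cite[Table~I]{LPS1996}.

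\textbf{Step~1: prune the possibilities for $G_v$.} We first remove candidates using the $2$-arc-transitivity of $\Sigma$.
\begin{enumerate}[(i)]
\item If $G_v$ is a maximal parabolic subgroup $P_1$ or $P_{n-1}$, or an extension of $P_{1,n-1}$ when graph automorphisms are present, it is excluded by~\cite[Lemma~2.13]{YFX2023}.
\item A $\calC_3$-subgroup $\Nor_G(\GL_{n/r}(q^r))$ is excluded by~\cite[Lemma~5.1]{GLX2019}.
\item The stabilizers $N_1$ and $N_{n-1}$ of complementary pairs appear only in the presence of a graph automorphism, and are dealt with by~\cite{GLX2019}. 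That paper treats linear groups and shows that vertex-primitive $2$-arc-transitive digraphs with such stabilizers have strong restrictions.
\item The small exceptional cases of~\cite[Table~3]{LPS1990}, for example $L=\PSL_2(q)$ with small $q$, or $\PSL_4(2)\cong\A_8$, have at most $30758154560$ vertices. These are excluded by~\cite[Theorem~1.1]{YFX2023}.
\end{enumerate}
After this pruning, $(G_v)^{(\infty)}$ should be quasisimple, of type $\PSp_n(q)$ with $n$ even and $n\ge4$. In that situation~\cite[Lemma~3.3]{YFX2023} limits the possible factorizations $G_v=G_{uv}G_{vw}$ of Lemma~\ref{LemPrimeFactn}.

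\textbf{Step~2: the $\calC_8$ symplectic case with $n\geq6$.} Here we apply Lemma~\ref{LemRglr}. In the factorization $G=XG_v$ with $G_v\cap L\trianglerighteq\PSp_n(q)$, the factor $X$ must be a $\calC_3$-type subgroup. So $X$ satisfies $X^{(\infty)}\trianglerighteq\SL_{n/2}(q^2)$, or it lies between $\SL_{n/r}(q^r)$ and $\Nor(\GL_{n/r}(q^r))$ for a suitable $r$, and $X$ has a unique nonsolvable composition factor $T=\PSL_{n/r}(q^r)$. We then check two things for every transitive core-free $Y\le G$.
\begin{enumerate}[(a)]
\item $|T|>|V(\Sigma)|$, where $|V(\Sigma)|=|L|/|\PSp_n(q)|\approx q^{n(n-2)/4}$. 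This is a direct order comparison.
\item There is a primitive prime divisor $p\in\ppd(q,n-1)$ with $|T|_p=|G|_p=|V(\Sigma)|_p$. Such $p$ exists by Theorem~\ref{BH1982}, and $p$ does not divide $|\PSp_n(q)|$ because $n-1$ is odd. Since $Y$ is transitive, $p$ divides $|Y|$, and the structure forced by~\cite{LPS1996} then gives $|T|_p=|G|_p$.
\end{enumerate}
Lemma~\ref{LemRglr} then says $G\wr\Sy_m$ has no regular subgroup, which contradicts Hypothesis~\ref{Hyp}.

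\textbf{Step~3: the case $(\Soc(G),(\Soc(G)_v)^{(\infty)})=(\PSL_4(q),\PSp_4(q))$.} This is where I expect the main obstacle, because $X$ may be solvable, for instance a Singer-type subgroup of order dividing $(q^4-1)/(q-1)\cdot 4$. In that case Lemma~\ref{LemRglr} does not apply. Instead I would rely on~\cite[Lemma~3.3]{YFX2023}, which pins the factorization of $G_v$ down to $G_{uv}^{(\infty)}$ and $G_{vw}^{(\infty)}$ being the two non-conjugate classes of subgroups $\Sp_2(q^2)$ and $\Sp_2(q)\times\Sp_2(q)$, or their $\Omega_4^-$/$\Omega_4^+$-analogues inside $\PSp_4(q)\cong\Omega_5(q)$. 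The planned route is to compute the centralizers of these two subgroups in $G^{(\infty)}=\PSL_4(q)$ through the natural $4$-dimensional module.
\begin{itemize}
\item $\SL_2(q)\times\SL_2(q)$ fixes a decomposition into two nondegenerate $2$-spaces, so its centralizer contains the scalar torus on each summand, a group of order about $q-1$.
\item $\SL_2(q^2)$ acts irreducibly, extended over $\mathbb{F}_{q^2}$, so its centralizer is only the scalars.
\end{itemize}
The centralizers are therefore non-isomorphic, so $G_{uv}$ and $G_{vw}$ cannot be conjugate in $G$. This contradicts Lemma~\ref{LemPrimeFactn}\eqref{LemPrimeFactnC}. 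Care is needed with small $q$, where the torus is trivial (for example $q=2,3$); those cases would be handled by the vertex bound of~\cite[Theorem~1.1]{YFX2023} or a \magma computation.
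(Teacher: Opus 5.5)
The overall plan has the right shape: prune by Aschbacher class using \cite[Lemma~2.13]{YFX2023}, \cite[Lemma~5.1]{GLX2019} and \cite[Lemma~3.3]{YFX2023}, then compare centralizers for $(\PSL_4(q),\PSp_4(q))$. But the decisive Step~3 is wrong as written. What \cite[Lemma~3.3]{YFX2023} gives is that $q$ is even and $(G_{uv})^{(\infty)}\cong(G_{vw})^{(\infty)}\cong\PSL_2(q^2)$. Then \cite[Theorem~A]{LPS1990} shows that one of them is the $\calC_3$-subgroup $S=\SL_2(q^2)$ of $\Sp_4(q)$ and the other is the $\calC_8$-subgroup $T=\Omega_4^-(q)$. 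These are the two classes interchanged by the graph automorphism of $\Sp_4(q)$ for $q$ even, which is why they survive that lemma. Your pair, $\Sp_2(q^2)$ and $\Sp_2(q)\times\Sp_2(q)$ (equivalently $\Omega_4^-(q)$ and $\Omega_4^+(q)$ inside $\Omega_5(q)$), consists of non-isomorphic groups. So it can never be the pair in Lemma~\ref{LemPrimeFactn}, which requires $G_{uv}$ and $G_{vw}$ to be conjugate.

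Your centralizer computation is also inverted. $\SL_2(q^2)$ acting on $\mathbb{F}_q^4=\mathbb{F}_{q^2}^2$ is irreducible but not absolutely irreducible. Its centralizer in $\GL_4(q)$ contains $\mathbb{F}_{q^2}^\times$, so $\Cen_L(S)\geq\Z_{q+1}$ in $L=\PGL_4(q)$. It is $\Omega_4^-(q)$ that is absolutely irreducible, whence $\Cen_L(T)=1$. With the correct pair and this computation, Lemma~\ref{LemPrimeFactn}\eqref{LemPrimeFactnC} is contradicted. No small-$q$ exception arises, since $q$ is even and $q+1\geq3$.

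Step~2 is also incorrect, because $\PSp_n(q)$ has no $\calC_3$-complement in $\PSL_n(q)$. For $n\geq6$ even, take a primitive prime divisor $p$ of $q^{n-1}-1$. Then $p$ divides $|L:\PSp_n(q)|$ and hence $|X|$, as your own item (b) requires. But $p$ does not divide $|\Nor_G(\GL_{n/r}(q^r))|$ for a prime $r\mid n$, since $n-1\nmid ri$ whenever $ri\leq n$. So your $T=\PSL_{n/r}(q^r)$ cannot satisfy (b). In \cite[Table~1]{LPS1990} the factor complementary to $\PSp_n(q)$ is $P_1$, $P_{n-1}$ or $\mathrm{Stab}(V_1\oplus V_{n-1})$, and you have not controlled the transitive core-free subgroups inside these.

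None of this is needed, however. Since $(G_v)^{(\infty)}=\PSp_n(q)$ is quasisimple, \cite[Lemma~3.3]{YFX2023} already rules out $n\geq6$ and odd $q$. Likewise, your Step~1(iii) excludes nothing: ``strong restrictions'' is not a contradiction. For $L_v=\mathrm{Stab}(V_1\oplus V_{n-1})$, \cite[Proposition~4.1.4]{KL1990} shows $(G_v)^{(\infty)}$ is quasisimple with composition factor $\PSL_{n-1}(q)$, and \cite[Lemma~3.3]{YFX2023} removes that case as well.
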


\begin{proof}
Suppose for a contradiction that $L=\PSL_n(q)$.
Since~\cite[Theorem~1.1]{YFX2023} asserts that $|\Aut(\PSU_n(q))|\geq|V(\Sigma)|\geq30758154560$, we have $(n,q)\neq(3,3)$, $(3,4)$, $(3,8)$, $(5,2)$ or $(2,q_0)$ for any $q_0\leq59$.
Then as $G=XG_v$, we see from~\cite[Theorem~A]{LPS1990} and~\cite[Theorem]{LPS1996} that one of the following cases occurs.

\smallskip
\noindent\textbf{Case~1:}
$G_v\in\calC_1$.
Then either $L_v=P_k$ with $k=1$ or $n-1$, or $L_v=\mathrm{Stab}(V_1\oplus V_{n-1})$ with $n\geq4$ even.
If $L_v=P_k$, then $G_v$ is a maximal parabolic subgroup, which is excluded by~\cite[Lemma~2.13]{YFX2023}.
Hence $L_v=\mathrm{Stab}(V_1\oplus V_{n-1})$.
By~\cite[Proposition~4.1.4]{KL1990},
\[
L_v=\Z_{(q-1)/\gcd(n,q-1)}.\PSL_{n-1}(q).\Z_{\gcd(n-1,q-1)}.
\]
In particular, $(G_v)^{(\infty)}$ is quasisimple.
It follows from~\cite[Lemma~3.3]{YFX2023} that this case does not occur, a contradiction.

\smallskip
\noindent\textbf{Case~2:}
$G_v\in\calC_3$.
This case is impossible by~\cite[Lemma~5.1]{GLX2019}.

\smallskip
\noindent\textbf{Case~3:}
$G_v\in\calC_8$.
Then $(L_v)^{(\infty)}=\PSp_n(q)$ with $n\geq4$ even.
It follows from~\cite[Lemma~3.3]{YFX2023} that $q$ is even, $n=4$, and $(G_{uv})^{(\infty)}\cong(G_{vw})^{(\infty)}=\PSL_2(q^2)$.
Moreover, since $G_v=G_{uv}G_{vw}$ and $(G_v)^{(\infty)}=\PSp_4(q)$, we derive from~\cite[Theorem~A]{LPS1990} that, interchanging $G_{uv}$ and $G_{vw}$ if necessary, $(G_{uv})^{(\infty)}=\PSL_2(q^2)$ is a $\calC_3$-subgroup of $(G_v)^{(\infty)}$ and $(G_{vw})^{(\infty)}=\Omega_4^-(q)$ is a $\calC_8$-subgroup of $(G_v)^{(\infty)}$.
Let $\varphi$ be the natural projection from $\GL_4(q)$ to $\PGL_4(q)=L$ (note that $\gcd(4,q-1)=1$), and let $S=(G_{uv})^{(\infty)}$ and $T=(G_{vw})^{(\infty)}$.
Then $S=(\SL_2(q^2))^\varphi$, and $T=\Omega_4^-(q)$ acts on $\mathbb{F}_q^4$ by stabilizing a non-degenerate quadratic form of minus type, whose associated bilinear form is the symplectic form preserved by $(L_v)^{(\infty)}=\Sp_4(q)$.
It follows that 
\[
\Cen_L(S)
=(\Cen_{\GL_4(q)}(\SL_2(q^2)))^\varphi
\geq(\Cen_{\GL_2(q^2)}(\SL_2(q^2)))^\varphi
\geq(\ZZ(\GL_2(q^2)))^\varphi
=\Z_{q+1}.
\]
However, $T$ acts absolutely irreducibly on $\mathbb{F}_q^4$ by~\cite[Proposition~2.10.6]{KL1990}, and hence $\Cen_L(T)=1$ by~\cite[Proposition~4.0.5(ii)]{KL1990}.
This leads to
\[
|\Cen_{G^{(\infty)}}((G_{uv})^{(\infty)})|
=|\Cen_L(S)|
\neq|\Cen_L(T)|
=|\Cen_{G^{(\infty)}}((G_{vw})^{(\infty)})|,
\]
which implies that $G_{uv}$ and $G_{vw}$ are not conjugate in $G$, contradicting Lemma~\ref{LemPrimeFactn}\eqref{LemPrimeFactnC}.
\end{proof}

\subsection{Unitary groups}
\ \vspace{1mm}

In this subsection, we handle the case where $L$ is a unitary group.

\begin{lemma}\label{LemU}
Under Hypothesis~$\ref{Hyp}$, $L$ is not isomorphic to $\PSU_n(q)$ with $n\geq 3$.
\end{lemma}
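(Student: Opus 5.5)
We argue by contradiction, following the pattern of Lemma~\ref{LemL}: assume $L=\PSU_n(q)$ with $n\geq3$. Combining~\cite[Theorem~1.1]{YFX2023} with $|V(\Sigma)|\geq30758154560$ removes all small pairs $(n,q)$. Since $G=XG_v$ with $X$ core-free and transitive, the factorization is one of those in~\cite[Theorem~A and Tables~1--3]{LPS1990} or~\cite[Table~I]{LPS1996}. For unitary groups this leaves three possibilities up to small exceptions. The first is $n=2k$ even with $G_v$ the stabilizer $N_1$ of a nonsingular $1$-space (or $P_k$ in the reverse role), and $X$ having $(X/\Rad(X))^{(\infty)}=\SU_{2k}(q)\cap\ldots$ of the form $\PSp_{2k}(q).a$ or $\PSU_{2k-1}(q)$-related. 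The second is $G_v=P_k$ or $\PSp_{2k}(q).a$ with $X$ the $N_1$-type factor. The third is a handful of sporadic rows such as $\PSU_3(3)$, $\PSU_3(5)$, $\PSU_3(8)$, $\PSU_4(2)$, $\PSU_4(3)$ and $\PSU_9(2)$.

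The small rows and any $P_k$ case are disposed of directly. The $P_k$ case is excluded by~\cite[Lemma~2.13]{YFX2023}, since $G_v$ would be a maximal parabolic subgroup. The sporadic rows are excluded by the bound $|G|/|G_v|<30758154560$ together with~\cite[Theorem~1.1]{YFX2023}.

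Next consider $G_v\in\calC_1$ of type $N_1$, so $L_v=\Z_{(q+1)/d}.\PSU_{n-1}(q).\Z_{e}$. Here $(G_v)^{(\infty)}=\SU_{n-1}(q)$ is quasisimple, and~\cite[Lemma~3.3]{YFX2023} forbids a factorization $G_v=G_{uv}G_{vw}$ with non-conjugate factors of order divisible as in Lemma~\ref{LemPrimeFactn}\eqref{LemPrimeFactnB}. This excludes the case, exactly as in Case~1 of Lemma~\ref{LemL}. Similarly, if $G_v\in\calC_5$ with $(L_v)^{(\infty)}=\PSp_n(q)$ (which appears in~\cite[Table~3]{LPS1990} for $n$ even), then~\cite[Lemma~3.3]{YFX2023} restricts the case to $n=4$ with $(G_{uv})^{(\infty)}=\PSL_2(q^2)$. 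I would then apply the centralizer argument of Case~3 of Lemma~\ref{LemL}. One factor is a $\calC_3$-subgroup with centralizer in $L$ containing a torus of order $q+1$ (now inside $\GU_4(q)$). The other is an absolutely irreducible $\Omega_4^-(q)$ with trivial centralizer by~\cite[Proposition~4.0.5]{KL1990}. Hence $G_{uv}$ and $G_{vw}$ are not conjugate in $G$, contradicting Lemma~\ref{LemPrimeFactn}\eqref{LemPrimeFactnC}.

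Any case where $(G_v)^{(\infty)}$ is not quasisimple, or where~\cite[Lemma~3.3]{YFX2023} does not decide, is handled by Lemma~\ref{LemRglr}. There $X\leq N_1$-type, so $(X/\Rad(X))^{(\infty)}=\PSU_{n-1}(q)$, and we check two things. First, $|\PSU_{n-1}(q)|>|G:G_v|$, which is a routine order comparison. Second, there is a prime $p$ with $|\PSU_{n-1}(q)|_p=|G|_p=|V(\Sigma)|_p$. For this I take $p\in\ppd(q,2(n-1))$ when $n-1$ is odd, or the appropriate primitive prime divisor from Theorem~\ref{BH1982}, and check that it divides $|\PSU_{n-1}(q)|$ to full power but not $|G_v|$. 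Lemma~\ref{LemRglr} then shows $G\wr\Sy_m$ has no regular subgroup, contradicting Hypothesis~\ref{Hyp}.

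The main obstacle is this last step. Lemma~\ref{LemRglr} requires the conditions for every transitive core-free subgroup $Y$, not just the given $X$, so I must verify from~\cite{LPS1996} that all such $Y$ share the composition factor $\PSU_{n-1}(q)$ and are nonsolvable. I also need to choose the primitive prime divisor carefully in the parity cases, where $\ppd(q,2(n-1))$ versus $\ppd(q,n-1)$ matters, and handle the Zsigmondy exceptions, such as $q=2$ with $n-1=3$, separately.
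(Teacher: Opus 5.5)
Your proposal has genuine gaps, mainly in the case division. It also contains a wrong centralizer claim in the $\calC_5$ case, which is repairable and would give a different route from the paper's.

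\textbf{Missing and misassigned cases.} The factorizations from \cite[Theorem~A and Table~3]{LPS1990} leave more possibilities for $G_v$ than your three.
\begin{enumerate}[(i)]
\item The $\calC_2$ case is never treated. Here $n=2m$, $q\in\{2,4\}$, and $L_v=\PSL_m(4).\Z_2$ or $\Z_3.\PSL_m(16).\Z_a.\Z_2$. The paper uses \cite[Lemma~3.3]{YFX2023} to reduce to $L_v=\PSL_3(4).\Z_2$ with $\pi(G_{uv})\subseteq\{2,3,7\}$. Then $5\in\pi(G_v)\setminus\pi(G_{uv})$ contradicts Lemma~\ref{LemPrimeFactn}\eqref{LemPrimeFactnB}.
\item The class $\mathcal{S}$ is mishandled: $(L,L_v)=(\PSU_9(2),\mathrm{J}_3)$ or $(\PSU_{12}(2),\mathrm{Suz})$. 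Your claim that the $\PSU_9(2)$ row falls to $|G|/|G_v|<30758154560$ is false, since $|\PSU_9(2)|/|\mathrm{J}_3|$ exceeds $10^{15}$. The $\mathrm{Suz}$ row is not mentioned at all. In the $\mathrm{J}_3$ row the complementary factor is $P_1$ (indeed $\mathrm{J}_3$ is transitive on the $43605$ isotropic points), so your fallback, which assumes $X$ lies in an $N_1$-subgroup, does not reach it. The right tool is \cite[Lemma~3.3]{YFX2023}, because $(G_v)^{(\infty)}$ is quasisimple.
\item In the $N_1$ case, \cite[Lemma~3.3]{YFX2023} does not exclude everything. It leaves $\PSU_4(8)$, which must then be removed by the vertex bound.
\end{enumerate}
Finally, your closing appeal to Lemma~\ref{LemRglr} is never attached to an identified case, so as written it does not settle anything.

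\textbf{The $\calC_5$ case.} The $\Z_{q+1}$ from the linear case does not transfer. Here $q$ is even, so $L=\SU_4(q)$, and $S=\Sp_2(q^2)$ is the $\calC_3$-factor. The elements of $\mathrm{GU}_4(q)$ of order dividing $q+1$ that centralize $S$ are scalars. Their determinant is $a^4\neq 1$, so they meet $L$ trivially.

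The argument can be repaired as follows.
\begin{itemize}
\item Over $\mathbb{F}_{q^2}$ the natural module splits as $W\oplus W^{(q)}$. These are non-isomorphic $S$-modules, and both are totally singular for the Hermitian form.
\item Hence $\Cen_{\mathrm{GU}_4(q)}(S)=\{(a,a^{-q}): a\in\mathbb{F}_{q^2}^*\}$, and intersecting with $\SU_4(q)$ gives $\Cen_L(S)\cong\Z_{q-1}$. This is nontrivial because $(n,q)=(4,2)$ is excluded, so $q\geq4$.
\item The $\calC_8$-factor $\Omega_4^-(q)$ is absolutely irreducible, so its centralizer in $L$ is trivial.
\end{itemize}

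With this fix your route is valid but differs from the paper's. The paper uses \cite{LX2022,LWX2024} to obtain $X^{(\infty)}=\SU_3(q)$. It then applies Lemma~\ref{LemRglr} with $T=\PSU_3(q)$ and $p\in\ppd(q,6)$, which is exactly your fallback for $n=4$. Your centralizer route has the advantage of not requiring control over every transitive core-free subgroup, which is the obstacle you yourself flagged.
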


\begin{proof}
Suppose for a contradiction that $L=\PSU_n(q)$ with $n\geq 3$.
Since~\cite[Theorem~1.1]{YFX2023} asserts that $|\Aut(\PSU_n(q))|\geq|V(\Sigma)|\geq30758154560$, we have $(n,q)\neq(3,3)$, $(3,5)$, $(3,8)$, $(4,2)$, $(4,3)$ or $(6,2)$.
Then as $G=XG_v$, we see from~\cite[Theorem~A]{LPS1990} and~\cite[Theorem]{LPS1996} that one of the following cases occurs.

\smallskip
\noindent\textbf{Case~1:}
$G_v\in\calC_1$.
Then $L_v=P_k$ (where $1\leq k\leq n/2$) or $N_1$ (with $n=2m$ even).
If $L_v=P_k$, then $G_v$ is a maximal parabolic subgroup, which is excluded by~\cite[Lemma~2.13]{YFX2023}.
Thus, $L_v=N_1$ with $n=2m$.
By~\cite[Proposition~4.1.4]{KL1990},
\[
L_v=\Z_{(q+1)/\gcd(2m,q+1)}.\PSU_{2m-1}(q).\Z_{\gcd(2m-1,q+1)}.
\]
In particular, either $(m,q)=(2,2)$ or $(G_v)^{(\infty)}$ is quasisimple.
If $(m,q)=(2,2)$, then $L=\PSU_4(2)$, $L_v=\SU_3(2).\Z_2$ and $|V(\Sigma)|=60$,
which is impossible by~\cite[Theorem~1.1]{YFX2023}.
Hence $(G_v)^{(\infty)}$ is quasisimple.
Then~\cite[Lemma~3.3]{YFX2023} implies that $(m,q)=(2,8)$, and so $|G|/|G_v|=|\PSU_4(8)|/|\Z_9.\PSU_3(8).\Z_3|<30758154560$, contradicting~\cite[Theorem~1.1]{YFX2023}.

\smallskip
\noindent\textbf{Case~2:}
$G_v\in\calC_2$.
In this case, $n=2m$, and according to~\cite[Proposition 4.2.4]{KL1990}, either $L_v=\PSL_m(4).\Z_2$ with $q=2$, or $L_v=\Z_3.\PSL_m(16).\Z_a.\Z_2$ with $q=4$ and $a=\gcd(m,255)/\gcd(m,17)$.
In particular, $(G_v)^{(\infty)}$ is quasisimple, and so~\cite[Lemma~3.3]{YFX2023} gives $L_v=\PSL_3(4).\Z_2$ with $(m,q)=(3,4)$, and interchanging $G_{uv}$ and $G_{vw}$ if necessary, $\pi(G_{uv})\subseteq\{2,3,7\}$ and $\pi(G_{vw})=\{2,3,5\}$.
It follows that $5\in\pi(G_v)$ but $5\notin\pi(G_{uv})$, contradicting Lemma~\ref{LemPrimeFactn}\eqref{LemPrimeFactnB}.

\smallskip
\noindent\textbf{Case~3:}
$G_v\in\calC_5$.
In this case, $n=2m$, and~\cite[Proposition 4.5.6]{KL1990} gives $L_v=\PSp_{2m}(q).\Z_a$ with $a=\gcd(2,q-1)\gcd(m,q+1)/\gcd(2m,q+1)$.
Then we derive from~\cite[Lemma~3.3]{YFX2023} that $L_v=\PSp_4(q)$ with $(m,q)=(2,2^f)$, and interchanging $G_{uv}$ and $G_{vw}$ if necessary, $(G_{uv}\cap\Soc(G_v),G_{vw}\cap\Soc(G_v))=(\PSL_2(q^2).\Z_2,\PSL_2(q^2))$ or $(\PSL_2(q^2).\Z_2,\PSL_2(q^2).\Z_2)$.
By~\cite[Theorem~1.1]{LX2022} and~\cite[Theorem~4.1]{LWX2024}, we derive from $G=XG_v$ that $X^{(\infty)}=\SU_3(q)$.
Hence $X$ has a unique nonsolvable composition factor $T=\PSU_3(q)$.
It is straightforward to verify that $|T|>|G|/|G_v|$, and that there exists a prime $p\in\ppd(q,6)$ satisfying $p\in\pi(G)\setminus\pi(G_v)$ and $|T|_p=|G|_p$.
It then follows from Lemma~\ref{LemRglr} that $G\wr\Sy_m$ has no regular subgroups, a contradiction.

\smallskip
\noindent\textbf{Case~4:}
$G_v\in\mathcal{S}$.
Then $(L,L_v)=(\PSU_9(2),\mathrm{J}_3)$ or $(\PSU_{12}(2),\mathrm{Suz})$ as in~\cite[Table~3]{LPS1990}.
Since $(G_v)^{(\infty)}=\mathrm{J}_3$ or $\mathrm{Suz}$, we obtain a contradiction by~\cite[Lemma~3.3]{YFX2023}.
\end{proof}

\subsection{Symplectic groups}
\ \vspace{1mm}

The analysis on symplectic groups $L$ is analogous to that on unitary groups.

\begin{lemma}\label{LemSp}
$L$ is not isomorphic to $\PSp_{2m}(q)$ with $m\geq2$.
\end{lemma}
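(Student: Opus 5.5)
Under Hypothesis~\ref{Hyp}, suppose for a contradiction that $L=\PSp_{2m}(q)$ with $m\geq2$. I follow the template of Lemma~\ref{LemU}.

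First, \cite[Theorem~1.1]{YFX2023} forces $|V(\Sigma)|\geq30758154560$, which discards the small pairs $(m,q)$, for instance $\PSp_4(3)$, $\PSp_6(2)$, $\PSp_4(4)$ and $\PSp_8(2)$. Since $G=XG_v$ with $X$ core-free, \cite[Theorem~A]{LPS1990} and \cite[Theorem]{LPS1996} (including the exceptional cases $\PSp_4(q)$ with $q$ even and $\PSp_6(q)$ containing $\mathrm{G}_2(q)$) restrict $G_v$ to the following:
\begin{enumerate}[(i)]
\item $L_v=P_1$, which is parabolic and excluded by~\cite[Lemma~2.13]{YFX2023};
\item $G_v\in\calC_3$, with $L_v$ of type $\PSp_m(q^2).\Z_2$ or $\mathrm{G}\Sp$-type over $\mathbb{F}_{q^2}$, or with $L_v=\PSU_m(q).\Z_2$ for $q$ even; the field-extension case is impossible by~\cite[Lemma~5.1]{GLX2019};
\item $G_v\in\calC_8$ with $q$ even, so that $L_v=\mathrm{O}^\pm_{2m}(q)$;
\item the novelty cases for $m=2$ with $q$ even, from the graph automorphism;
\item $\mathrm{G}_2(q)<\PSp_6(q)$ with $q$ even;
\item small exceptional rows of \cite[Table~I]{LPS1996}.
\end{enumerate}

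In every remaining case $(G_v)^{(\infty)}$ is quasisimple, so I invoke~\cite[Lemma~3.3]{YFX2023} to force a very specific factorization $G_v=G_{uv}G_{vw}$, or to rule the case out directly. The survivors I expect are the following:
\begin{enumerate}[(a)]
\item $(G_v)^{(\infty)}=\Omega^-_{2m}(q)$ with $m=2$ or small $m$;
\item $(G_v)^{(\infty)}=\mathrm{G}_2(q)$ in $\PSp_6(q)$;
\item for $m=2$, $q$ even, $(G_v)^{(\infty)}=\Omega^+_4$-type or $\PSL_2(q^2)$-type.
\end{enumerate}
Some survivors will be killed by the prime argument of Case~2 of Lemma~\ref{LemU}: a prime of $G_v$ (a $\ppd$) that divides neither factor contradicts Lemma~\ref{LemPrimeFactn}\eqref{LemPrimeFactnB}. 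Others will be killed by the size bound $|G|/|G_v|<30758154560$.

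For each genuinely surviving pair $(G,G_v)$, I use the known transitive factor $X$ from~\cite[Table~1]{LPS1990} and~\cite{LX2022,LWX2024}. Typical examples are $X^{(\infty)}=\Sp_{m}(q^2)$ or $\PSp_{2m/b}(q^b)$ against $\mathrm{O}^-_{2m}(q)$, and $X^{(\infty)}$ of type $\Omega_{2m}^-(q)$ against $\mathrm{O}^+_{2m}(q)$. In each case $X$ has a unique nonsolvable composition factor $T$. I then check that $|T|>|G|/|G_v|$ and pick $p\in\ppd(q,2m)$, or another suitable ppd, with $p\in\pi(G)\setminus\pi(G_v)$ and $|T|_p=|G|_p$; for instance, for $G_v$ of type $\mathrm{O}^+_{2m}(q)$ one takes $p\in\ppd(q,2m)$. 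Lemma~\ref{LemRglr} then shows that $G\wr\Sy_m$ has no regular subgroup, a contradiction.

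The main obstacle is the case where solvability of $X$ cannot be excluded, so that Lemma~\ref{LemRglr} fails. This is the analogue of the $\PSL_4(q)\supset\PSp_4(q)$ case in Lemma~\ref{LemL}, most likely occurring for $\Sp_4(q)$ with $q$ even, with $G_v$ of type $\mathrm{O}^-_4(q)\cong\PSL_2(q^2).\Z_2$ or among the novelties. There I would use the argument of Lemma~\ref{LemL}, Case~3. Take the two factors given by~\cite[Lemma~3.3]{YFX2023}, with $(G_{uv})^{(\infty)}$ and $(G_{vw})^{(\infty)}$ lying in different Aschbacher classes, and compare their centralizers in $G^{(\infty)}$. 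One of them is absolutely irreducible, so its centralizer is trivial by~\cite[Proposition~4.0.5]{KL1990}; the other centralizes a nontrivial torus, for instance $\Z_{q+1}$ from an $\mathbb{F}_{q^2}$-structure. The centralizers therefore differ, so $G_{uv}$ and $G_{vw}$ are not conjugate in $G$, contradicting Lemma~\ref{LemPrimeFactn}\eqref{LemPrimeFactnC}.
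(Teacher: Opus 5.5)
Your framework matches the paper's. You classify $G_v$ via \cite{LPS1990} and \cite{LPS1996}, kill quasisimple $(G_v)^{(\infty)}$ with \cite[Lemma~3.3]{YFX2023} and the size bound, and kill the remaining cases with Lemma~\ref{LemRglr} via a $\ppd$. The gap is in the case list, which misses exactly the cases that need real work.

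\textbf{First gap: two missing families.} Your list omits two families from \cite[Table~1]{LPS1990}:
\begin{itemize}
\item $L_v=N_2=\Sp_2(q)\times\Sp_{2m-2}(q)$ with $q\in\{2,4\}$;
\item $L_v=\Sp_m(q)\wr\Sy_2$ with $m,q$ even.
\end{itemize}
For $N_2$ with $q=4$, and for $\Sp_m(q)\wr\Sy_2$, $(G_v)^{(\infty)}$ is a direct product of two nonabelian simple groups. So your sentence ``in every remaining case $(G_v)^{(\infty)}$ is quasisimple'' is false here, and \cite[Lemma~3.3]{YFX2023} does not apply. The paper handles them as follows:
\begin{itemize}
\item For $N_2$: $q=2$ is quasisimple and goes to Lemma~3.3. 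For $q=4$, the size bound forces $m\ge6$. Then \cite[Theorem~1.1]{LX2019} gives $X^{(\infty)}=\mathrm{G}_2(16)$ (with $m=6$) or $\Sp_m(16)$, and Lemma~\ref{LemRglr} applies with $p=241$ or $p\in\ppd(4,2m)$.
\item For $\Sp_m(q)\wr\Sy_2$: \cite[Theorem~1.1]{LX2019} gives $X\cap L=\POmega^-_{2m}(q)$, and Lemma~\ref{LemRglr} applies with $p\in\ppd(q,2m)$.
\end{itemize}

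\textbf{Second gap: the $\calC_3$ case.} You dismiss $L_v=\Sp_{2a}(q^r)$ by citing \cite[Lemma~5.1]{GLX2019}. That lemma concerns $\calC_3$-subgroups of linear groups, which is why the paper only uses it in Lemma~\ref{LemL} and for $\PSL_2(q)$. It says nothing about symplectic groups, and this case does not die cheaply. \cite[Lemma~3.3]{YFX2023} still permits $L_v=\Sp_4(q^r)$ with $q$ even: $\Sp_4(2^f)$ has a factorization with two $\PSL_2(q^2)$-type factors satisfying Lemma~\ref{LemPrimeFactn}. This is the same configuration that survives in Case~3 of Lemmas~\ref{LemL} and~\ref{LemU}. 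The paper finishes it with \cite{LX2022} and \cite{LWX2024}, which show that $X$ has a composition factor $\Sp_{2m-2}(q)$ or $\Omega^\pm_{2m}(q)$, and then applies Lemma~\ref{LemRglr} with $p\in\ppd(q,2m-2)$.

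\textbf{Smaller omissions.} These are all easily dispatched:
\begin{itemize}
\item $P_m$, handled by the same parabolic lemma;
\item the $\calC_5$ pair $(\Sp_4(16),\Sp_4(4))$, handled by the size bound;
\item $\mathrm{Sz}(q)<\Sp_4(q)$, handled by Lemma~3.3.
\end{itemize}

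\textbf{The $\calC_8$ case.} The paper does not run Lemma~3.3, a $\ppd$ argument, or a centralizer comparison here. It cites \cite[Proposition~4.23]{CGP2025} directly. So the ``main obstacle'' you anticipate does not arise in the paper's proof. In your sketch it remains speculative: the transitive factor against $\mathrm{O}^-_{2m}(q)$ may sit inside $P_m$ and be solvable, and you have not specified the factorization whose centralizers you would compare.
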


\begin{proof}
Suppose for a contradiction that $L=\PSp_{2m}(q)$ with $m\geq2$.
By~\cite[Theorem~1.1]{YFX2023}, $(m,q)\neq(2,3)$, $(2,4)$ or $(3,3)$.
Then as $G=XG_v$, we see from~\cite[Theorem~A]{LPS1990} and~\cite[Theorem]{LPS1996} that one of the following cases occurs.

\smallskip
\noindent\textbf{Case~1:}
$G_v\in\calC_1$.
Then either $L_v=P_i$ with $i\in\{1,m\}$, or $L_v=N_2$ with $q\in\{2,4\}$.
If $L_v=P_1$ or $P_m$, then $G_v$ is a maximal parabolic subgroup, which is excluded by~\cite[Lemma~2.13]{YFX2023}.
Thus, $L_v=N_2$ with $q\in\{2,4\}$.
By~\cite[Proposition~4.1.3]{KL1990}, we have $L_v=\Sp_2(q)\times\Sp_{2m-2}(q)$.
If $q=2$, then $G_v^{(\infty)}$ is quasisimple, it follows from~\cite[Lemma~3.3]{YFX2023} that this case does not occur.
Hence $q=4$, and since~\cite[Theorem~1.1]{YFX2023} asserts that $|V(\Sigma)|=|G|/|G_v|\geq30758154560$, we conclude that $m\geq6$.
Then $L$ has index at most $2$ in $G$, and so does $L_v=\Sp_2(4)\times\Sp_{2m-2}(4)$ in $G_v$.
By~\cite[Theorem~1.1]{LX2019}, it follows from $G=XG_v$ that either $(L,X^{(\infty)})=(\Sp_{12}(4),\mathrm{G}_2(16))$ with $m=6$, or $(L,X^{(\infty)})=(\Sp_{2m}(4),\Sp_m(16))$ with $m$ even.
In particular, $|X^{(\infty)}|>|G|/|G_v|$.
Moreover, for $(L,X^{(\infty)})=(\Sp_{12}(4),\mathrm{G}_2(16))$, the prime $241\in\pi(G)\setminus\pi(G_v)$ is such that $|\mathrm{G}_2(16)|_{241}=|\Sp_{12}(4)|_{241}=|G|_{241}$;
for $(L,X^{(\infty)})=(\Sp_{2m}(4),\Sp_m(16))$, any prime $p\in\ppd(4,2m)$ satisfies $p\in\pi(G)\setminus\pi(G_v)$ and $|\Sp_m(16)|_p=|\Sp_{2m}(4)|_p=|G|_p$.
Thus, Lemma~\ref{LemRglr} asserts that $G\wr\Sy_m$ has no regular subgroups, a contradiction.

\smallskip
\noindent\textbf{Case~2:}
$G_v\in\calC_2$.
Then $L_v=\Sp_m(q)\wr\Sy_2$ with $m$ and $q$ even, and it follows from~\cite[Theorem~1.1]{LX2019} that $X\cap L=\POmega_{2m}^-(q)$.
Hence $X$ has a unique nonsolvable composition factor $T=\POmega_{2m}^-(q)$.
It is straightforward to verify that $|T|>|G|/|G_v|$, and that there exists a prime $p\in\ppd(q,2m)$ satisfying $p\in\pi(G)\setminus\pi(G_v)$ and $|T|_p=|G|_p$.
Thus, it follows from Lemma~\ref{LemRglr} that $G\wr\Sy_m$ has no regular subgroups, a contradiction.

\smallskip
\noindent\textbf{Case~3:}
$G_v\in\calC_3$.
Then $L_v=\Sp_{2a}(q^r)$, where $r$ is a prime and $m=ar$.
In particular, $G_v^{(\infty)}$ is quasisimple, it follows from~\cite[Lemma~3.3]{YFX2023} that $L_v=\Sp_4(q^r)$.
Since $G=XG_v$, it follows from~\cite[Theorem~1.1]{LX2022} and~\cite[Theorem~8.1]{LWX2024} that $X$ has a nonsolvable composition factor $T=\Sp_{2m-2}(q)$ or $\Omega_{2m}^\pm(q)$.
It is straightforward to verify that $|T|>|G|/|G_v|$, and that there exists a prime $p\in\ppd(q,2m-2)$ satisfying $p\in\pi(G)\setminus\pi(G_v)$ and $|T|_p=|G|_p$.
It then follows from Lemma~\ref{LemRglr} that $G\wr\Sy_m$ has no regular subgroups, a contradiction.

\smallskip
\noindent\textbf{Case~4:}
$G_v\in\calC_5\cup\calC_8\cup\mathcal{S}$, or $L=\Sp_8(2)$ as in~\cite[Table~3]{LPS1990}.
If $G_v\in\calC_5$, then since $G_v^{(\infty)}$ is quasisimple, it follows from~\cite[Lemma~3.3]{YFX2023} that $(L,L_v)=(\Sp_4(16),\Sp_4(4))$, and so $|V(\Sigma)|\leq|\Sp_4(16)|/|\Sp_4(4)|<30758154560$, contradicting~\cite[Theorem~1.1]{YFX2023}.
If $G_v\in\calC_8$, then~\cite[Proposition~4.23]{CGP2025} yields a contradiction.
If $L=\Sp_8(2)$ as in~\cite[Table~3]{LPS1990}, then $|V(\Sigma)|\leq|\Sp_8(2)|/|\PSL_2(17)|<30758154560$, contradicting~\cite[Theorem~1.1]{YFX2023}.
Thus, $G_v\in\mathcal{S}$ as in~\cite[Table~2]{LPS1990}, and one of the following holds:
\begin{itemize}
\item $m=2$, $q=2^{2c+1}\geq8$, and $L_v=\mathrm{Sz}(q)$;
\item $m=3$, $q=2^f$, and $L_v=\mathrm{G}_2(q)$.
\end{itemize}
Then $(G_v)^{(\infty)}$ is simple, and~\cite[Lemma~3.3]{YFX2023} gives a contradiction.
\end{proof}

\subsection{Orthogonal groups}
\ \vspace{1mm}

This subsection is devoted to the case where $L$ is an orthogonal group.
We divide these groups into four types: odd dimension, even dimension of minus type, even dimension at least $10$ of plus type, and $8$-dimensional plus type.
These types are analyzed sequentially in Lemmas~\ref{LemO}--\ref{LemO8+} respectively.

\begin{lemma}\label{LemO}
Under Hypothesis~$\ref{Hyp}$, $L$ is not isomorphic to $\Omega_{2m+1}(q)$ with $m\geq 3$ and $q$ odd.
\end{lemma}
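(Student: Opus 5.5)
Suppose for a contradiction that $L=\Omega_{2m+1}(q)$ with $m\geq3$ and $q$ odd. The plan follows the pattern of Lemmas~\ref{LemU} and~\ref{LemSp}. First, since $G=XG_v$ with $X$ core-free and transitive on $V(\Sigma)$, the factorization is one of those in~\cite[Theorem~A and Table~2--3]{LPS1990} or~\cite[Table~I]{LPS1996}. For $q$ odd the only maximal factorizations of $\Omega_{2m+1}(q)$ have one factor among the following:
\begin{itemize}
\item $N_1^-$ (with $\Omega_{2m}^-(q)$);
\item $N_1^\pm$ (with $P_m$);
\item $\mathrm{G}_2(q)$ (with $m=3$, and other factor $N_1^\pm$, $P_1$ or $N_2^-$-type);
\item $P_m$, $P_1$.
\end{itemize}
The small exceptional cases, such as $\Omega_7(3)$ with $\Sp_6(2)$ or $\mathrm{S}_9$ and $\Omega_{13}(3)$ with $\PSp_6(3)$, are handled separately. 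I would split by the Aschbacher class of $G_v$.

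\textbf{Parabolic case.} If $L_v=P_1$ or $P_m$, then $G_v$ is a maximal parabolic subgroup, and this is excluded by~\cite[Lemma~2.13]{YFX2023}.

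\textbf{Quasisimple case.} If $L_v=N_1^\pm$ (so $(L_v)^{(\infty)}=\Omega_{2m}^\pm(q)$), or $L_v=\mathrm{G}_2(q)$ with $m=3$, or $L_v$ is one of the sporadic $\mathcal{S}$-type candidates, then $(G_v)^{(\infty)}$ is quasisimple. I would invoke~\cite[Lemma~3.3]{YFX2023}, which lists the quasisimple stabilizers admitting a factorization $G_v=G_{uv}G_{vw}$ with nonconjugate factors of the right order. None of the groups $\Omega_{2m}^\pm(q)$ for $q$ odd, nor $\mathrm{G}_2(q)$ for $q$ odd, should survive that list. If a residual case does survive, for instance $\Omega_8^+(q)$ appearing inside $\Omega_9(q)$, I would treat it like Case~3 of Lemma~\ref{LemU}. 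That is, I determine $X^{(\infty)}$ from the factorization $G=XG_v$ via~\cite{LX2022,LWX2024}; I expect $X^{(\infty)}=\Omega_{2m}^-(q)$ or $\mathrm{G}_2(q)$, or its appropriate counterpart. Then I take the unique nonsolvable composition factor $T$ of $X$ and check that $|T|>|G|/|G_v|$. I also exhibit a prime $p\in\ppd(q,2m)$ (or $\ppd(q,6)$ when $m=3$) lying in $\pi(G)\setminus\pi(G_v)$ with $|T|_p=|G|_p$. Lemma~\ref{LemRglr} then gives the contradiction.

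\textbf{Small cases.} For the remaining small cases from~\cite[Table~3]{LPS1990} and~\cite[Table~I]{LPS1996}, such as $L=\Omega_7(3)$ or $\Omega_{13}(3)$, I would bound $|V(\Sigma)|=|G|/|G_v|$. Where this index is below $30758154560$,~\cite[Theorem~1.1]{YFX2023} gives a contradiction. Otherwise I would apply~\cite[Lemma~3.3]{YFX2023} to the quasisimple $(G_v)^{(\infty)}$, or use Lemma~\ref{LemPrimeFactn}\eqref{LemPrimeFactnB} via a prime divisor of $|G_v|$ that cannot divide both factors.

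The main obstacle I expect is the case $L_v=N_1^-$, whose complementary factor $X$ can be solvable-by-$\PSp_m(q)$-like, or in~\cite{LPS1996} even have small nonsolvable part. For this case it is essential that~\cite[Lemma~3.3]{YFX2023} already excludes $(G_v)^{(\infty)}=\Omega_{2m}^-(q)$ outright. If it does not, I would fall back on the centralizer argument from Case~3 of Lemma~\ref{LemL}. The aim there is to show that the two factors $G_{uv}$ and $G_{vw}$ of $\Omega_{2m}^-(q)$ have centralizers in $L$ of different orders, so they are not conjugate in $G$, contradicting Lemma~\ref{LemPrimeFactn}\eqref{LemPrimeFactnC}.
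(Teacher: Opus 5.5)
Your plan is essentially the paper's proof: parabolic stabilizers are excluded by \cite[Lemma~2.13]{YFX2023}, every other candidate ($N_1^\epsilon$, $N_2^\epsilon$, and the $\mathcal{S}$-type stabilizers with socle $\mathrm{G}_2(q)$, $\PSp_6(q)$ or $\mathrm{F}_4(q)$) has quasisimple $(G_v)^{(\infty)}$ and is ruled out directly by \cite[Lemma~3.3]{YFX2023}, and $(m,q)=(3,3)$ falls to the order bound of \cite[Theorem~1.1]{YFX2023}, so neither Lemma~\ref{LemRglr} nor the centralizer fallback is ever needed. Do tidy your candidate list, though: $N_1^+$ is a factor only when $m=3$ (it does not pair with $P_m$, so the $\Omega_8^+(q)<\Omega_9(q)$ case you feared does not arise), $N_2^\epsilon$ should be listed explicitly among the quasisimple cases, and the $\PSp_6(q)$ and $\mathrm{F}_4(q)$ stabilizers form infinite families ($q=3^f$, $m=6,12$) rather than the single small case $q=3$.
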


\begin{proof}
Suppose for a contradiction that $L=\Omega_{2m+1}(q)$ with $m\geq 3$ and $q$ odd.
It follows from~\cite[Theorem~1.1]{YFX2023} that $(m,q)\neq(3,3)$.
Then since $G=XG_v$, we see from~\cite[Theorem~A]{LPS1990} and~\cite[Theorem]{LPS1996} that $G_v\in\calC_1\cup\mathcal{S}$.

Suppose that $G_v\in\calC_1$.
Then $L_v=P_1$, $P_m$, $N_1^\epsilon$ or $N_2^\epsilon$ as given in~\cite[Tables~1 and~2]{LPS1990}, where $\epsilon\in\{+,-\}$.
In particular, if $L_v=N_1^+$, then $m=3$.
If $L_v=P_1$ or $P_m$, then $G_v$ is a maximal parabolic subgroup, which is excluded by~\cite[Lemma~2.13]{YFX2023}.
Therefore, $L_v=N_1^\epsilon$ or $N_2^\epsilon$, where $N_1^\epsilon=\Omega_{2m}^\epsilon(q).2$ and $N_2^\epsilon=(\Omega_2^\epsilon(q)\times\Omega_{2m-1}(q)).[4]$ as given in~\cite[Proposition~4.1.6]{KL1990}.
However, it follows that $(G_v)^{(\infty)}$ is quasisimple, which is not possible by~\cite[Lemma~3.3]{YFX2023}.

Thus, $L_v\in\mathcal{S}$ and $G_v$ is almost simple with socle $\mathrm{G}_2(q)$, $\PSp_6(q)$ or $\mathrm{F}_4(q)$ as in~\cite[Table~2]{LPS1990}.
This contradicts~\cite[Lemma~3.3]{YFX2023}, completing the proof.
\end{proof}

\begin{lemma}\label{LemO-}
Under Hypothesis~$\ref{Hyp}$, $L$ is not isomorphic to $\POmega_{2m}^-(q)$ with $m\geq 4$.
\end{lemma}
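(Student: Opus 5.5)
We argue by contradiction, following the pattern of Lemmas~\ref{LemU} and~\ref{LemSp}. Suppose $L=\POmega_{2m}^-(q)$ with $m\geq4$. First we use~\cite[Theorem~1.1]{YFX2023}, which requires $|V(\Sigma)|\geq30758154560$, to discard the few small pairs $(m,q)$ (for instance $(4,2)$ and $(4,3)$) for which every relevant index is too small. Since $G=XG_v$ with $X$ core-free and transitive, \cite[Theorem~A and Table~1]{LPS1990} together with~\cite[Theorem]{LPS1996} restrict $G_v$. Up to the exchange of factors, either $L_v\in\{P_1,N_1,N_2^+\}$ (with $q=4$ in the $N_2^+$ case), or $m$ is even and $L_v$ is the $\calC_3$-subgroup $\Omega_m^-(q^2).2$ or $\GU_m(q)$ (with $m$ odd in the $\GU$ case), accompanied by the corresponding factors $X$ listed in those tables.

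Next we handle each case in turn.
\begin{enumerate}[(i)]
\item If $L_v=P_1$, then $G_v$ is a maximal parabolic subgroup, which~\cite[Lemma~2.13]{YFX2023} excludes.
\item If $L_v=N_1=\Omega_{2m-1}(q).a$, then $(G_v)^{(\infty)}$ is quasisimple. The list in~\cite[Lemma~3.3]{YFX2023} contains no socle $\Omega_{2m-1}(q)$ with $m\geq4$ (at most small exceptions arise, which the vertex bound removes), so this case is impossible.
\item If $L_v=N_2^+=(\Omega_2^+(4)\times\Omega_{2m-2}^-(4)).[\cdot]$, then $(G_v)^{(\infty)}=\Omega_{2m-2}^-(4)$ is quasisimple, and~\cite[Lemma~3.3]{YFX2023} again gives a contradiction.
\item If $L_v$ lies in $\calC_3$, then $(G_v)^{(\infty)}$ is $\SU_m(q)$ or $\Omega_m^-(q^2)$, which is quasisimple, so~\cite[Lemma~3.3]{YFX2023} leaves essentially no possibilities.
\end{enumerate}

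Where~\cite[Lemma~3.3]{YFX2023} fails to exclude a case outright, we fall back on Lemma~\ref{LemRglr}. By~\cite[Theorem~1.1]{LX2022} and~\cite[Theorem~1.1]{LX2019}, the factor $X$ has a unique nonsolvable composition factor $T$, namely $\POmega_{2m-1}(q)$, $\SU_m(q)$, or $\Omega_m^-(q^2)$ according to the case. We then check two things. First, $|T|>|G|/|G_v|$, which follows from order estimates. Second, some prime $p\in\ppd(q,2m)$ divides $|G|$ but not $|G_v|$ and satisfies $|T|_p=|G|_p$. Here the $\ppd(q,2m)$ primes divide $q^m+1$, so they divide the order of any factor containing $\Omega_m^-(q^2)$ or $\SU_m(q)$ (with $m$ odd) or $\POmega_{2m-1}(q)$ only appropriately; we choose $p$ to divide the order of the transitive factor but not that of $G_v$. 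Lemma~\ref{LemRglr} then shows that $G\wr\Sy_m$ has no regular subgroup, contradicting Hypothesis~\ref{Hyp}.

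The main obstacle is the bookkeeping in~\cite[Table~1]{LPS1990} for $\POmega_{2m}^-(q)$. Each candidate pair needs a prime $p$ that divides the order of the nonsolvable factor of $X$, avoids $\pi(G_v)$, and attains the full $p$-part of $|G|$. If Zsigmondy-type primes fail in some small exception, that case would be settled either by the index bound of~\cite[Theorem~1.1]{YFX2023} or by the centralizer non-conjugacy argument used in Lemma~\ref{LemL}, which contradicts Lemma~\ref{LemPrimeFactn}\eqref{LemPrimeFactnC}.
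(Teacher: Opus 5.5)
For the cases you list, your argument is the paper's: $P_1$ is excluded by \cite[Lemma~2.13]{YFX2023}, and for $N_1$, $N_2^+$ and the two $\calC_3$ subgroups the group $(G_v)^{(\infty)}$ is quasisimple, so \cite[Lemma~3.3]{YFX2023} excludes them outright.

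The gap is that your enumeration of $G_v$ is incomplete. By reading only Table~1 of \cite{LPS1990}, you miss the $\mathcal{S}$-case from Table~3: $L=\Omega_{10}^-(2)$ with $L_v=\A_{12}$. None of your cases (i)--(iv) covers it. The paper disposes of it with the vertex bound, since $|V(\Sigma)|=|L|/|L_v|=|\Omega_{10}^-(2)|/|\A_{12}|=104448<30758154560$, contradicting \cite[Theorem~1.1]{YFX2023}. Your preliminary step would absorb this case if $(m,q)=(5,2)$ were among the discarded pairs. You only name $(4,2)$ and $(4,3)$, however, so as written the case is unaccounted for.

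Your fallback via Lemma~\ref{LemRglr} is unnecessary, because \cite[Lemma~3.3]{YFX2023} already rules out every quasisimple case. As stated, it would also not work in the $\calC_3$ cases. There the primes in $\ppd(q,2m)$ divide $q^m+1$, which divides $|\Omega_m^-(q^2)|$ for $m$ even and $|\SU_m(q)|$ for $m$ odd. Hence these primes lie in $\pi(G_v)$ and cannot satisfy $|G|_p=|V(\Sigma)|_p$. Drop the fallback and the hedges (``essentially no possibilities'', ``at most small exceptions''), invoke \cite[Lemma~3.3]{YFX2023} directly as the paper does, and add the missing $\mathcal{S}$-case.
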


\begin{proof}
Suppose for a contradiction that $L=\POmega_{2m}^-(q)$ for some $m\geq 4$.
Then since $G=XG_v$, we derive from~\cite[Theorem~A]{LPS1990} and~\cite[Theorem]{LPS1996} that one of the following cases occurs.

\smallskip
\noindent\textbf{Case~1:}
$G_v\in\calC_1$.
In this case, $L_v=P_1$, $N_1$ or $N_2^+$.
If $L_v=P_1$, then $G_v$ is a maximal parabolic subgroup, which is excluded by~\cite[Lemma~2.13]{YFX2023}.
If $L_v=N_1$, then by~\cite[Proposition~4.1.6]{KL1990}, $L_v=\Omega_{2m-1}(q)$, and~\cite[Lemma~3.3]{YFX2023} asserts that this is not possible.
Therefore, $L_v=N_2^+$, where $m$ is odd and $q=4$.
Then~\cite[Proposition~4.1.6]{KL1990} implies that $(G_v)^{(\infty)}=\Omega_{2m-2}^-(4)$ is quasisimple, which is impossible by~\cite[Lemma~3.3]{YFX2023}.

\smallskip
\noindent\textbf{Case~2:}
$G_v\in\calC_3$.
Then it follows from~\cite[Propositions~4.3.16 and~4.3.18]{KL1990} that $(G_v)^{(\infty)}=\POmega_m^-(q^2)$ or $\Z_{(q+1)/\gcd(q+1,4)}.\PSU_m(q)$.
It follows that $(G_v)^{(\infty)}$ is quasisimple, and~\cite[Lemma~3.3]{YFX2023} shows that this is not possible.

\smallskip
\noindent\textbf{Case~3:}
$G_v\in\mathcal{S}$.
Then $L=\Omega_{10}^-(2)$ and $L_v=\A_{12}$ as in~\cite[Table~3]{LPS1990}.
As a consequence, $|G|/|G_v|=|\Omega_{10}^-(2)|/|\A_{12}|<30758154560$, contradicting~\cite[Theorem~1.1]{YFX2023}.
\end{proof}

\begin{lemma}\label{LemO+}
Under Hypothesis~$\ref{Hyp}$, $L$ is not isomorphic to $\POmega_{2m}^+(q)$ with $m\geq 5$.
\end{lemma}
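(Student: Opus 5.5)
I will argue by contradiction, following the pattern of Lemmas~\ref{LemO} and~\ref{LemO-}. Assume $L=\POmega_{2m}^+(q)$ with $m\geq5$. Since $G=XG_v$ with $X$ core-free and transitive, \cite[Theorem~A]{LPS1990} and \cite[Theorem]{LPS1996} (Tables~1--4 of \cite{LPS1990} for $\POmega^+_{2m}$) show that $L_v$, up to a triality-free graph automorphism, is one of the following: $P_1$, $P_m$ or $P_{m-1}$; $N_1$; $N_2^-$; a $\calC_2$-subgroup $(\Sp_2(q)\otimes\Sp_m(q))$-type or $\GL_m(q).2$ stabiliser; a $\calC_3$-subgroup $\GU_m(q)$-type or $\Omega_m^+(q^2)$-type; or a $\calC_5$-type subgroup, for instance the $\Omega_{2m}^+(q^{1/2})$-type case from the $\Omega_{2m}^+(4)$, $\Omega_{2m}^+(16)$ rows. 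The sporadic $\mathcal{S}$-rows, such as $\Omega_{16}^+(2)$ or $\Omega_{24}^+(2)$ with $L_v=\Sp_8(2)$ or $\mathrm{Co}_1$, are handled the same way: $(G_v)^{(\infty)}$ is quasisimple, so \cite[Lemma~3.3]{YFX2023} rules them out.

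For the individual cases, I treat parabolic $L_v$ by \cite[Lemma~2.13]{YFX2023}. For $L_v=N_1$, \cite[Proposition~4.1.6]{KL1990} gives $(G_v)^{(\infty)}=\Omega_{2m-1}(q)$, which is quasisimple, and \cite[Lemma~3.3]{YFX2023} excludes it. The same holds for $N_2^-$ with $q\in\{2,4\}$, where $(G_v)^{(\infty)}=\Omega_{2m-2}^-(q)$ because $\Omega_2^-(q)$ is cyclic. It also holds for the $\calC_3$-subgroups, where $(G_v)^{(\infty)}$ is $\Z.\PSU_m(q)$ or $\POmega_m^+(q^2)$, and for the $\calC_5$ case, where $(G_v)^{(\infty)}$ is quasisimple. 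In each of these I check that $(G_v)^{(\infty)}$ is not among the short list $\Sp_4(2^f)$-type/$\POmega_8^+$-type exceptions allowed by \cite[Lemma~3.3]{YFX2023}. If some exception does survive, for example an $\Omega_8^+$ socle when $m=8$ or a small $q$, I fall back on the bound $|V(\Sigma)|\geq30758154560$ from \cite[Theorem~1.1]{YFX2023} or on Lemma~\ref{LemPrimeFactn}\eqref{LemPrimeFactnB}.

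The remaining cases are those where $(G_v)^{(\infty)}$ is not quasisimple. The main one is $L_v$ of type $\Sp_2(q)\otimes\Sp_m(q)$ with $q\geq4$ even and $m$ even. I handle it exactly as in Case~1 of Lemma~\ref{LemSp}. Using the factorization classification \cite[Theorem~1.1]{LX2019} (and \cite{LX2022,LWX2024} if needed), I determine $X$, expecting $X^{(\infty)}=\Omega_{2m-1}(q)$, i.e.\ $N_1$. Then $X$ has a unique nonsolvable composition factor $T=\Omega_{2m-1}(q)$ with $|T|>|G|/|G_v|$. Choose $p\in\ppd(q,2m-2)$. It divides $|L|$ but not $|G_v|$, since the orders of $\Sp_2(q)$ and $\Sp_m(q)$ only involve $q^{i}-1$ with $i\leq m$, and it satisfies $|T|_p=|G|_p=|V(\Sigma)|_p$. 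Lemma~\ref{LemRglr} then shows that $G\wr\Sy_m$ has no regular subgroup, contradicting Hypothesis~\ref{Hyp}. The $\GL_m(q).2$ case, if it is present in the tables, I treat the same way: $(G_v)^{(\infty)}=\SL_m(q)$ is quasisimple, so \cite[Lemma~3.3]{YFX2023} applies directly.

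I expect the main obstacle to be verifying the hypotheses of Lemma~\ref{LemRglr}. These must hold for every transitive core-free subgroup $Y$, not just for $X$, so I must show that all such $Y$ have the same simple section $T$. This requires the full list of factorizations $G=YG_v$ with $Y$ core-free, and confirming that none of the resulting $Y$ is solvable. If a solvable $Y$ cannot be ruled out, as in the $\PSL_4(q)$ situation of Lemma~\ref{LemL}, I would instead use Lemma~\ref{LemPrimeFactn}\eqref{LemPrimeFactnC}. The approach there is to compute the factorization $G_v=G_{uv}G_{vw}$ and show that the two factors have centralizers of different orders in $G^{(\infty)}$, so they cannot be conjugate in $G$.
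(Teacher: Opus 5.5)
\textbf{Verdict: genuine gap.} Your overall architecture is the paper's:
\begin{itemize}
\item parabolic $L_v$ is excluded by \cite[Lemma~2.13]{YFX2023};
\item quasisimple $(G_v)^{(\infty)}$ is excluded by \cite[Lemma~3.3]{YFX2023};
\item in the tensor-product case, \cite[Theorem~1.1]{LX2019} gives $X\cap L=\Omega_{2m-1}(q)$, and Lemma~\ref{LemRglr} then applies with $p\in\ppd(q,2m-2)$.
\end{itemize}
The gap is in how you treat the cases that \emph{survive} \cite[Lemma~3.3]{YFX2023}. You correctly anticipate the $\calC_3$ subcase with $m=8$, $q\in\{2,4\}$ and $(G_v)^{(\infty)}=\Omega_8^+(q^2)$, but neither of your fallbacks disposes of it.
\begin{itemize}
\item The vertex bound is useless here: $|V(\Sigma)|$ is roughly $|\Omega_{16}^+(q)|/|\Omega_8^+(q^2)|$, which vastly exceeds $30758154560$.
\item Lemma~\ref{LemPrimeFactn}\eqref{LemPrimeFactnB} only constrains $G_v$, and a suitable factorization exists there. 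For example, $\Omega_8^+(q^2)=\Omega_7(q^2)\,\Omega_7(q^2)^{\tau}$ with $\tau$ a triality gives two non-conjugate factors of equal order meeting the divisibility in (b). That is exactly why this case survives \cite[Lemma~3.3]{YFX2023}.
\end{itemize}
What is needed is the global argument you reserved for the tensor case. Apply \cite[Theorem~1.1]{LX2022} and \cite[Theorem~4.1]{LWX2024} to $G=XG_v$ to obtain $X^{(\infty)}=\Sp_{14}(q)$. Then check $|\Sp_{14}(q)|>|G|/|G_v|$, and feed $p\in\ppd(q,14)$ into Lemma~\ref{LemRglr}. This prime divides $|\Sp_{14}(q)|$ and $|G|$ to the same power but does not divide $|G_v|$.

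Your case list is also incomplete in ways that matter.
\begin{itemize}
\item You list $N_2^-$ but not $N_2^+$, which does occur in \cite[Table~1]{LPS1990} (with $q=4$). For $m=5$ it gives $(G_v)^{(\infty)}=\Omega_8^+(4)$, another survivor of \cite[Lemma~3.3]{YFX2023}. This one does fall to the vertex bound, since $|L:L_v|\leq|\Omega_{10}^+(4)|/|\Omega_8^+(4)|<30758154560$, but it has to be identified.
\item The tensor row $(\PSp_2(q)\otimes\PSp_m(q)).a$ allows every $q>2$, not only even $q$. For $q=3$, $\PSp_2(3)$ is solvable, so $(G_v)^{(\infty)}=\PSp_m(3)$ is simple and is excluded by \cite[Lemma~3.3]{YFX2023}. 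For odd $q\geq5$, your Lemma~\ref{LemRglr} argument goes through unchanged.
\end{itemize}
There are also smaller mislabelings: the tensor subgroup is $\calC_4$, not $\calC_2$; $\Omega_m^+(16).2^2<\Omega_{2m}^+(4)$ is $\calC_3$, not $\calC_5$; and the $\mathcal{S}$-row is $(\POmega_{16}^+(q),\Omega_9(q))$ for all $q$.

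Finally, your worry about verifying Lemma~\ref{LemRglr} for every transitive core-free $Y$ is not a real obstacle. The subgroup $X$ in Hypothesis~\ref{Hyp} is arbitrary, and the factorization theorems determine every core-free $X$ with $G=XG_v$. So the verification covers all such $Y$ at once.
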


\begin{proof}
Suppose for a contradiction that $L=\POmega_{2m}^+(q)$ with $m\geq 5$.
Since $G=XG_v$, we derive from~\cite[Theorem~A]{LPS1990} and~\cite[Theorem]{LPS1996} that one of the following cases occurs.

\smallskip
\noindent\textbf{Case~1:}
$G_v\in\calC_1$.
Then $L_v\cong P_1$, $N_1$ or $N_2^\pm$ as in~\cite[Theorem~A]{LPS1990}.
If $L_v=P_1$, then $G_v$ is a maximal parabolic subgroup, which is excluded by~\cite[Lemma~2.13]{YFX2023}.
Thus, $L_v=N_1$ or $N_2^\pm$, and by~\cite[Proposition~4.1.6]{KL1990}, $(G_v)^{(\infty)}$ is quasisimple.
Then~\cite[Lemma~3.3]{YFX2023} implies that $L_v=N_2^+$ and $m=5$.
However, according to~\cite[Table~1]{LPS1990}, $q=4$.
This leads to $|G|/|G_v|\leq|\POmega_{10}^+(4)|/|\POmega_8^+(4)|<30758154560$, contradicting~\cite[Theorem~1.1]{YFX2023}.

\smallskip
\noindent\textbf{Case~2:}
$G_v\in\calC_2$.
In this case, $(G_v)^{(\infty)}$ is quasisimple with the unique nonsolvable composition factor $\PSL_m(q)$, contradicting~\cite[Theorem~3.3]{YFX2023}.

\smallskip
\noindent\textbf{Case~3:}
$G_v\in\calC_3$.
Then $q\in\{2,4\}$, and $(G_v)^{(\infty)}$ is quasisimple with the unique nonsolvable composition factor $\PSL_m(q)$ or $\Omega_m^+(q^2)$.
By~\cite[Lemma~3.3]{YFX2023}, $(G_v)^{(\infty)}=\Omega_8^+(q^2)$ with $m=8$.
By~\cite[Theorem~1.1]{LX2022} and~\cite[Theorem~4.1]{LWX2024}, we derive from $G=XG_v$ that $X^{(\infty)}=\Sp_{2m-2}(q)$.
It is straightforward to verify that $|\Sp_{2m-2}(q)|>|G|/|G_v|$, and that any $p\in\ppd(q,2m-2)$ satisfies $p\in\pi(G)\setminus\pi(G_v)$ and $|\Sp_{2m-2}(q)|_p=|G|_p$.
It then follows from Lemma~\ref{LemRglr} that $G\wr\Sy_m$ has no regular subgroups, a contradiction.

\smallskip
\noindent\textbf{Case~4:}
$G_v\in\calC_4$.
Then $L_v=\PSp_2(q)\otimes\PSp_m(q)$ with $m$ even and $q>2$, and we deduce from~\cite[Proposition 4.4.12]{KL1990} that $L_v$ has a subgroup $\PSp_2(q)\times\PSp_m(q)$ of index at most $2$.
If $q=3$, then $(G_v)^{(\infty)}=\PSp_m(3)$ is simple, which is not possible by~\cite[Lemma~3.3]{YFX2023}.
Therefore, $q\geq4$ and $(G_v)^{(\infty)}=\PSp_2(q)\times\PSp_m(q)$.
It follows from~\cite[Theorem~1.1]{LX2019} that $X\cap L=\Omega_{2m-1}(q)$.
Hence $X$ has a unique nonsolvable composition factor $T=\Omega_{2m-1}(q)$.
Since $|T|>|G|/|G_v|$ and any $p\in\ppd(q,2m-2)$ satisfies $p\in\pi(G)\setminus\pi(G_v)$ and $|T|_p=|G|_p$, it follows from Lemma~\ref{LemRglr} that $G\wr\Sy_m$ has no regular subgroups, a contradiction.

\smallskip
\noindent\textbf{Case~5:}
$G_v\in\mathcal{S}$.
Then $(L,(G_v)^{(\infty)})=(\Omega_{24}^+(2),\Co_1)$ or $(\POmega_{16}^+(q),\Omega_9(q))$.
In particular, $(G_v)^{(\infty)}$ is quasisimple, and~\cite[Lemma~3.3]{YFX2023} shows that this case is not possible.
\end{proof}

\begin{lemma}\label{LemO8+}
Under Hypothesis~$\ref{Hyp}$, $L$ is not isomorphic to $\POmega_8^+(q)$.
\end{lemma}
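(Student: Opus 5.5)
Suppose for a contradiction that $L=\POmega_8^+(q)$. As in the previous lemmas, we first use $G=XG_v$ with $X$ a transitive core-free subgroup. From~\cite[Theorem~A]{LPS1990}, \cite[Table~4]{LPS1990} and~\cite[Theorem]{LPS1996}, up to triality we get a short list of candidates for $L_v$: the parabolics $P_1,P_3,P_4$; $N_1=\Omega_7(q)$ (together with its triality images, the $\calC_9$-type $\Omega_7(q)$ subgroups); $N_2^\pm$ for small $q$; $\calC_3$-subgroups with socle $\POmega_4^-(q^2)$ or $\PSU_4(q)$; and the $\mathcal{S}$-subgroups occurring for small $q$, such as $\A_9$ or $\Omega_8^+(2)$ in $\Omega_8^+(4)$. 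The plan is to remove almost all of these with the tools already used in Lemmas~\ref{LemL}--\ref{LemO+}.

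\textbf{Routine cases.} If $G_v$ is a maximal parabolic subgroup, including the novelty parabolics arising when $G$ contains triality, we use~\cite[Lemma~2.13]{YFX2023}. When $(G_v)^{(\infty)}$ is quasisimple, which covers $N_1$, its triality images, $N_2^\pm$ and the $\calC_3$ cases, we apply~\cite[Lemma~3.3]{YFX2023}. If that lemma leaves a residual case with specified $(G_{uv})^{(\infty)}$ and $(G_{vw})^{(\infty)}$, we argue as in Case~3 of Lemma~\ref{LemL}. We compare $|\Cen_{G^{(\infty)}}((G_{uv})^{(\infty)})|$ and $|\Cen_{G^{(\infty)}}((G_{vw})^{(\infty)})|$. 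A factor acting absolutely irreducibly on the natural module has trivial centralizer, while a $\calC_3$-type factor centralizes a nontrivial torus. The two stabilizers are then not conjugate in $G$, contradicting Lemma~\ref{LemPrimeFactn}\eqref{LemPrimeFactnC}. For small $q$, in particular the $\mathcal{S}$ cases and $q\le 4$, we check whether $|G|/|G_v|<30758154560$ and, if so, invoke~\cite[Theorem~1.1]{YFX2023}. Any remaining sporadic possibilities are settled by a \magma\ factorization check of $G_v$ against Lemma~\ref{LemPrimeFactn}\eqref{LemPrimeFactnB},\eqref{LemPrimeFactnC}.

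\textbf{Remaining cases.} Suppose a candidate survives with $L_v=\Omega_7(q)$ or its triality image. Then $X$ must come from the factorizations $\POmega_8^+(q)=\Omega_7(q)\cdot\Omega_7(q)'$ or similar. By~\cite[Theorem~1.1]{LX2022} and~\cite[Theorem~4.1]{LWX2024}, $X^{(\infty)}$ is then a group such as $\Omega_7(q)$, $\Omega_6^-(q)$ or $\mathrm{Sp}_6(q)$, with a unique nonsolvable composition factor $T$. We then verify $|T|>|G|/|G_v|$ and find a prime $p\in\ppd(q,6)$, or $\ppd(q,4)$ as appropriate, with $p\in\pi(G)\setminus\pi(G_v)$ and $|T|_p=|G|_p$. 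Lemma~\ref{LemRglr} then gives the contradiction.

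\textbf{Main obstacle.} The difficulty is the triality automorphism. When $G$ contains triality, the maximal subgroups and the factorizations in~\cite[Table~4]{LPS1990} are richer, and the triality-twisted $\Omega_7(q)$ subgroups can satisfy the order divisibility of Lemma~\ref{LemPrimeFactn}\eqref{LemPrimeFactnB}. This can happen while $X$ is possibly solvable or has a small socle. Here I would first try to show that $G_{uv}$ and $G_{vw}$ can be chosen inside distinct $\Aut(L)$-classes of $\Omega_7(q)$-type subgroups, and then use the conjugacy requirement of Lemma~\ref{LemPrimeFactn}\eqref{LemPrimeFactnC} together with $\Nor_G(L_v)=G_v$. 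This is the same argument as Case~2 of Lemma~\ref{LemASFin}, and it forces $g\in G_v$, a contradiction.
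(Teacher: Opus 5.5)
There is a genuine gap: your list of candidates for $G_v$ omits the $\calC_4$ (tensor-product) case $G_v\cap L=(\PSp_2(q)\times\PSp_4(q)).2$ with $q$ odd. This case occurs in \cite[Table~4]{LPS1990} as the partner of $\Omega_7(q)$ in a factorization of $L$, and it is the one case of the lemma that needs a genuine argument. None of your tools reaches it:
\begin{enumerate}[(i)]
\item For $q\geq5$ the group $(G_v)^{(\infty)}=\PSp_2(q)\times\PSp_4(q)$ is not quasisimple, so \cite[Lemma~3.3]{YFX2023} does not apply.
\item $|G:G_v|$ is roughly $q^{15}/2$ and exceeds $30758154560$ for all $q\geq7$, so the order bound of \cite[Theorem~1.1]{YFX2023} leaves an infinite family.
\item No \magma computation can cover an infinite family.
\end{enumerate}
The paper argues as follows. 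Since $G_v$ has two nonsolvable composition factors, \cite[Theorem~1.1]{LX2019} forces $X\cap L=\Omega_7(q)$. Hence $X$ has the unique nonsolvable composition factor $T=\Omega_7(q)$, and $|T|>|G|/|G_v|$. Any $p\in\ppd(q,6)$ divides $q^2-q+1$, so it does not divide $|G_v|$, whose order involves only $q$, $q^2-1$, $q^4-1$ and $|\Out(L)|$. At the same time $|T|_p=|G|_p$, and Lemma~\ref{LemRglr} gives the contradiction.

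Your ``remaining cases'' paragraph contains exactly this mechanism, but attached to the wrong subgroup. With $L_v=\Omega_7(q)$ it cannot work. The orders $|\Omega_7(q)|$ and $|\POmega_8^+(q)|$ have the same prime divisors, namely those of $q(q^4-1)(q^6-1)$, and $G_v$ covers $G/L$. So every prime dividing $|G|$ divides $|G_v|$, and no $p$ satisfies $|V(\Sigma)|_p=|G|_p$ as condition~(b) of Lemma~\ref{LemRglr} requires. In particular, both $\ppd(q,6)$ and $\ppd(q,4)$ divide $|\Omega_7(q)|$. The group $\Omega_7(q)$ has to play the role of the transitive factor $X$, not of the stabilizer.

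You do not actually need that paragraph, or the ``main obstacle'' paragraph. The case $L_v=\Omega_7(q)$, like every case in which $(G_v)^{(\infty)}$ is quasisimple, is eliminated directly by \cite[Lemma~3.3]{YFX2023}. Those cases have composition factor $\POmega_6^\pm(q)$, $\Omega_7(q)$, or $\POmega_8^-(q^{1/2})$; you do not list the last one either. The pair of triality-related $\Omega_7(q)$ factors matters only when the stabilizer itself has socle $\POmega_8^+(q)$. That is not the situation here, where $G_{uv},G_{vw}\leq G_v$.

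Apart from the missing tensor case, your routine steps agree with the paper: parabolics, quasisimple $(G_v)^{(\infty)}$, and the cases $q=2$ and $(\POmega_8^+(3),2^6.\A_8)$ handled via the order bound.
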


\begin{proof}
Suppose for a contradiction that $L=\POmega_8^+(q)$.
By~\cite[Theorem~1.1]{YFX2023}, $q\neq2$.
Then as $G=XG_v$, we see from~\cite[Theorem~A]{LPS1990} (see~\cite[\S2]{GGS2024} for the missing factorizations of $\Omega_8^+(4)$ and $\Omega_8^+(16)$) and~\cite[Theorem]{LPS1996} that one of the following cases occurs.

\smallskip
\noindent\textbf{Case~1:}
$G_v$ is a maximal parabolic subgroup.
This case is excluded by~\cite[Lemma~2.13]{YFX2023}.

\smallskip
\noindent\textbf{Case~2:}
$(G_v)^{(\infty)}$ is quasisimple with a nonsolvable composition factor isomorphic to $\POmega_6^\pm(q)$, $\Omega_7(q)$ or $\POmega_8^-(q_0)$ for some $q_0<q$.
It follows from~\cite[Lemma~3.3]{YFX2023} that this case is not possible.

\smallskip
\noindent\textbf{Case~3:}
$(G_v)^{(\infty)}=\PSp_2(q)\times\PSp_4(q)$ with $q$ odd.
We derive from~\cite[Theorem~1.1]{LX2019} that $X\cap L=\Omega_7(q)$, and so $X$ has a unique nonsolvable composition factor $T=\Omega_7(q)$.
Since $|T|>|G|/|G_v|$ and any $p\in\ppd(q,6)$ satisfies $p\in\pi(G)\setminus\pi(G_v)$ and $|T|_p=|G|_p$, it follows from Lemma~\ref{LemRglr} that $G\wr\Sy_m$ has no regular subgroups, a contradiction.

\smallskip
\noindent\textbf{Case~4:}
$L=\POmega_8^+(3)$ and $L_v=\Z_2^6.\A_8$.
Then $|G|/|G_v|=|\POmega_8^+(3)|/|\Z_2^6.\A_8|<30758154560$, contradicting~\cite[Theorem~1.1]{YFX2023}.
\end{proof}

\end{document}